\newtheorem{theorem}{Theorem}[section]
\newtheorem{lemma}[theorem]{Lemma}
\newtheorem{claim}[theorem]{Claim}
\newtheorem{corollary}[theorem]{Corollary}
\newtheorem{conjecture}[theorem]{Conjecture}
\newtheorem{problem}[theorem]{Problem}
\newcommand{\tpp}[1]{\ensuremath{\mathcal{T}_{#1}}}
\newcommand{\pp}[1]{\ensuremath{\mathcal{P}_{#1}}}
\newcommand{\hyper}{\mathcal}
\journal{a journal}
\title{A family of extremal hypergraphs for Ryser's conjecture}
\author[imp]{A.~Abu-Khazneh}
\ead{a.abu-khazneh@imperial.ac.com}
\address[imp]{Faculty of Natural Sciences, Imperial College London, Exhibition Road, SW7 2AZ London, UK}
\author[pan]{J\'{a}nos Bar\'{a}t\fnref{JSup}}
\ead{barat@cs.elte.hu}
\address[pan]{Department of Mathematics, University of Pannonia, 8200 Veszpr\'em, Hungary\\ Geometric and Algebraic Combinatorics Research Group, MTA-ELTE, H--1117 Budapest, Hungary}
\author[eth]{A.~Pokrovskiy\corref{cor}\fnref{ASup}}
\ead{dr.alexey.pokrovskiy@gmail.com}
\address[eth]{Department of Mathematics, ETH Z\"urich, 8092 Z\"urich, Switzerland}
\author[fur]{Tibor Szab\'{o}}
\ead{szabo@math.fu-berlin.de}
\address[fur]{Institut f\"ur Mathematik, Freie Universit\"at, 14195 Berlin, Germany}
\begin{document}

\begin{abstract}
Ryser's Conjecture states that for any $r$-partite $r$-uniform 
hypergraph, the vertex cover number is at most $r{-}1$ times the matching number. 
This conjecture is only known to be true for $r\leq 3$ in general and for $r\leq 5$ 
if the hypergraph is intersecting. 
There has also been considerable effort made for finding 
hypergraphs that are extremal for Ryser's Conjecture, 
i.e. $r$-partite hypergraphs whose cover number is $r-1$ times
its matching number.
Aside from a few sporadic examples, the set of uniformities $r$ for
which Ryser's Conjecture is known to be tight is 
limited to those integers for which a projective plane of order $r-1$
exists. 

We produce a new infinite family of $r$-uniform hypergraphs extremal to Ryser's
Conjecture, which exists whenever a projective plane of order $r-2$ exists. 
Our construction is flexible enough to produce a large number of non-isomorphic 
extremal hypergraphs.  
In particular, we define what we call the {\em Ryser poset} of 
extremal intersecting $r$-partite $r$-uniform hypergraphs and show
that the number of maximal and minimal elements is exponential in
$\sqrt{r}$.

This provides further evidence for the difficulty of Ryser's Conjecture.  
\end{abstract}

\begin{keyword}
  Hypergraph Matching, Ryser's Conjecture, Extremal Structure
  \end{keyword}

\maketitle
\section{Introduction}

A {\em cover} of a hypergraph is a set of vertices meeting every edge of the hypergraph. 
The vertex {\it cover number} $\tau({\cal H})$ of a hypergraph ${\cal H}$ is the number of vertices in the smallest cover of ${\cal H}$. 
A {\em matching} is a set of disjoint edges, and the {\it matching number} $\nu ({\cal H})$ 
of a hypergraph ${\cal H}$ is the  maximum size of a matching consisting of edges of 
${\cal H}$.   A hypergraph with $\nu({\cal H})=1$ is called {\it intersecting}.

A hypergraph is $r$-uniform if every edge has $r$ vertices. 
Any $r$-uniform hypergraph ${\cal H}$ satisfies the inequality $\tau ({\cal H})\le r 
\nu({\cal H})$, since  the union of  the edges of a maximum matching is a cover.
This bound is sharp, as shown by the family of all subsets of size $r$ in a ground set of size $kr-1$ which has 
$\nu=k-1$ and $\tau=(k-1)r$.
There is another sharp example for $\nu =1 $:  any $r$-uniform hypergraph 
consisting of the lines of some projective plane of order $r{-}1$ 
(denoted by $\mathcal{P}_{r}$).
To obtain an example for arbitrary $\nu$, one can take 
the union of disjoint copies of $\mathcal{P}_{r}$. 
A hypergraph is {\it $r$-partite} if its vertex set $V$ can be partitioned into $r$ sets $V_1,\dots, V_r$, called the {\it sides} 
of the hypergraph, so that every edge contains at most one vertex from each side.  
A conjecture commonly attributed to Ryser (but which first appeared in a thesis by his student Henderson ~\cite{HEND,BW}), asserts that the upper bound $\tau ({\cal H})\le r 
\nu({\cal H})$  can be improved if the hypergraph is $r$-partite:

\medskip

\begin{conjecture}\label{RyserConjecture}
  For any $r$-partite $r$-uniform hypergraph we have 
  \begin{equation}\label{RyserBound}
    \tau(\hyper H)\leq (r{-}1) \nu(\hyper H).
  \end{equation}
\end{conjecture}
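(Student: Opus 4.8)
The plan is to settle the small uniformities directly and then describe the shape a general argument would have to take. For $r=1$ the statement is vacuous, and for $r=2$ an $r$-partite $r$-uniform hypergraph is exactly a bipartite graph, so \eqref{RyserBound} reads $\tau=\nu$ and is K\"onig's theorem; I would derive it from LP duality together with the total unimodularity of the bipartite incidence matrix (equivalently, from max-flow/min-cut). The first real case is therefore $r=3$.

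For $r=3$ I would follow the topological route of Aharoni. Fix a maximum matching $M$ and observe that $\bigcup M$ is already a cover of size $3\nu$; the task is to produce a cover of size $2\nu$, which, after a standard reformulation, becomes the existence of a large matching in an auxiliary hypergraph built from $M$ and a minimum cover. I would establish that matching using the topological Hall theorem of Aharoni and Haxell, so that the only substantial point is to bound from below the connectivity of the independence complexes of the relevant link hypergraphs; via the nerve theorem this reduces to an explicit computation, and everything else is bookkeeping. I do not see how to avoid the topology at $r=3$.

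For general $r$ the natural first move is the fractional relaxation: LP duality gives $\nu\le\nu^{\ast}=\tau^{\ast}\le\tau$, and the fractional form $\tau^{\ast}\le(r-1)\nu$ is a theorem of F\"uredi, so it would suffice to control the integrality gap $\tau/\tau^{\ast}$ for $r$-partite $r$-uniform hypergraphs. This is where I expect the attempt to stall, and where every known attack has stalled. The bound \eqref{RyserBound} is attained with equality --- by the truncated-projective-plane hypergraphs whenever a projective plane of order $r-1$ exists, and, as this paper shows, also whenever one of order $r-2$ does --- so the integrality gap is genuinely present and no averaging or rounding argument by itself can close it; a proof for $r\ge4$ would have to be packaged with real structural understanding of the near-extremal hypergraphs, and the richness of the Ryser poset built below suggests how much would be needed. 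One might instead hope to reduce the general statement to the intersecting case $\nu=1$, but no such reduction is known, and the intersecting case is itself open for $r\ge6$. The concrete obstacle is thus already the step from $r=3$ to $r=4$.
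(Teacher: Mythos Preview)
The statement you were asked to prove is labeled \emph{Conjecture} in the paper, and the paper does not prove it; indeed, it explicitly records that only the cases $r\le 3$ are known in general (K\"onig for $r=2$, Aharoni for $r=3$), with the intersecting case settled for $r\le 5$ by Tuza. So there is no ``paper's own proof'' to compare against.

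Your write-up is not a proof but an accurate status report, and you are right to flag where it stalls. The $r=2$ and $r=3$ sketches are correct in outline and match what the paper cites. Your remarks about the fractional relaxation (F\"uredi's bound $\tau^{\ast}\le (r-1)\nu$), the absence of a reduction from general $\nu$ to the intersecting case, and the genuine integrality gap witnessed by truncated projective planes and by the constructions of this paper are all to the point. The honest conclusion --- that no proof is currently available for $r\ge 4$ --- is the correct one; the paper's contribution is precisely to enlarge the family of extremal examples, not to resolve the conjecture.
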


When $r=2$, Ryser's Conjecture is equivalent to K\"onig's Theorem. 
The only other known general case of the conjecture is 
$r=3$, which was proved by Aharoni \cite{AHR}. 
However, the conjecture is also known to be true for some special cases. 
In particular, it has been proven by Tuza \cite{TUZ} for $r$-partite intersecting hypergraphs when $r \leq 5$, 
and  by Franceti\'c, Herke, McKay, and Wanless \cite{FHMW} for $r \leq 9$,
when one makes the further assumption that any two edges of the 
$r$-partite hypergraph intersect in exactly one vertex.

Besides trying to prove the conjecture, there has also been considerable effort in understanding which hypergraphs are extremal 
for Ryser's Conjecture, i.e. finding $r$-partite hypergraphs $\hyper H$ with $\tau(\hyper H)= (r{-}1) \nu(\hyper H)$. 
We call such an object an {\em $r$-Ryser hypergraph} (or, without
specifying its uniformity, a {\em Ryser hypergraph}).
Denoted by $\mathcal{T}_{r}$, the {\em truncated projective plane} of uniformity $r$ is obtained from $\mathcal{P}_{r}$ by the removal 
of a single vertex $v$ and the lines containing $v$. 
The sides $V_1,\dots, V_r$ of $\mathcal{T}_{r}$ are the sets of vertices other than 
$v$ on the lines containing $v$. 
It is known and not difficult to see that $\mathcal{T}_r$ is 
intersecting and its cover number is one less than its uniformity $r$.
Except for finitely many sporadic examples, all minimal hypergraphs known to attain Ryser's bound 
are subhypergraphs of truncated projective planes. Consequently, aside from finitely many exceptions, the set of uniformities 
$r$ for which Ryser's Conjecture is known to be tight is limited to those  integers for which
a projective plane of order $r{-}1$ exists.

Finite projective planes are only known to exist for orders that are prime powers, and it is 
a long-standing open problem to decide 
whether there exists a projective plane of any other order.
A few non-existence results are known about projective planes, in particular 
it has been shown that finite projective 
planes of order $6$ and $10$ do not exist \cite{BRRY, LTS}. 
This implies that the first values of $r$, for which the truncated projective plane construction of uniformity $r$ does not work are $7$ and $11$.
Inspired by the lack of examples attaining Ryser's bound for these values, 
Aharoni, Bar\'{a}t and Wanless~\cite{ABW} constructed $7$-partite intersecting hypergraphs with cover number $6$. 
This was also obtained independently by Abu-Khazneh and Pokrovskiy~\cite{AKP}, who also constructed an 
$11$-partite intersecting hypergraph with cover number $10$. In~\cite{FHMW} Franceti\'c, Herke, McKay, and Wanless  constructed a $13$-partite intersecting hypergraph with cover number $12$.

\subsection{Results} 

Our main goal is to construct intersecting $r$-Ryser hypergraphs 
for an infinite sequence of uniformities $r$ such that
$r{-}1$ is {\em not} a prime power. 
We prove the following theorem.

\medskip

\begin{theorem}~\label{MainTheorem}
Let $\mathcal T$ be an $r$-partite $r$-uniform intersecting hypergraph, and let 
$S\in \mathcal T$ be an edge such that 
$S$ intersects every other edge in one vertex, $\tau(\mathcal T-S) =r{-}1$, 
and the only covers of $\mathcal{T}-S$ of size $r{-}1$ are sides.
Then there exists an intersecting $(r+1)$-Ryser hypergraph $\hyper H$.
\end{theorem}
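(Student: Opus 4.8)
The plan is to build $\mathcal H$ explicitly from $\mathcal T-S$ by adjoining one new side and redistributing the vertices of $S$. Write $S=\{s_1,\dots,s_r\}$ with $s_i\in V_i$; we may assume $r\ge 3$. For each edge $e$ of $\mathcal T-S$ let $j(e)\in[r]$ be the unique index with $e\cap S=\{s_{j(e)}\}$ — the assumption that $S$ meets every other edge in exactly one vertex is used precisely to make $j(e)$ well defined. Introduce a new side $V_{r+1}=\{s_1',\dots,s_r'\}$, a disjoint copy of $S$, and let $\mathcal H$ be the $(r+1)$-uniform $(r+1)$-partite hypergraph on $V_1\cup\dots\cup V_{r+1}$ having, for each $e\in\mathcal T-S$, the \emph{lifted} edge $\widehat e:=e\cup\{s_{j(e)}'\}$, and, for each $i\in[r]$, the \emph{transversal} edge $g_i:=(S\setminus\{s_i\})\cup\{s_i'\}$. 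There is genuine freedom here — one may additionally keep a lifted copy of $S$, or enlarge $V_{r+1}$ and vary the gluing rule $e\mapsto s_{j(e)}'$ — and exploiting it is what should eventually yield the promised abundance of non-isomorphic extremal hypergraphs; but the bare construction already proves the theorem.

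First I would check that $\mathcal H$ is intersecting: two lifted edges meet since $e\cap f\ne\emptyset$ already in $\mathcal T$; two transversal edges $g_i,g_j$ meet at $s_k$ for any $k\in[r]\setminus\{i,j\}$ (this is where $r\ge 3$ enters); and $\widehat e$ meets $g_i$ at $s_{j(e)}$ when $j(e)\ne i$, and at $s_i'$ when $j(e)=i$ (because then $s_i'\in\widehat e$). Thus $\nu(\mathcal H)=1$, so Ryser's bound reads $\tau(\mathcal H)\le r$, and the matching upper bound is witnessed explicitly by $V_{r+1}$, since $s_{j(e)}'\in\widehat e$ and $s_i'\in g_i$ show $V_{r+1}$ is a cover of size $r$. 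Everything therefore reduces to the lower bound $\tau(\mathcal H)\ge r$.

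That lower bound is the crux, and the step I expect to be the main obstacle, because one must rule out covers of $\mathcal H$ that mix ``old'' vertices with vertices of the new side $V_{r+1}$. Suppose $C$ is a cover with $|C|\le r-1$. If $C\subseteq V_1\cup\dots\cup V_r$, then $C$ meets each $\widehat e$ away from $s_{j(e)}'$, hence covers every $e\in\mathcal T-S$, so $C$ is a cover of $\mathcal T-S$ of size at most $r-1$; by the rigidity hypothesis $C$ is a side $V_\ell$, which is impossible since $g_\ell$ has no vertex in $V_\ell$. So $C$ meets $V_{r+1}$; put $J=\{i:s_i'\in C\}\ne\emptyset$ and $C'=C\cap(V_1\cup\dots\cup V_r)$, and consider $C''=C'\cup\{s_i:i\in J\}$, of size at most $|C|\le r-1$. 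The key point is that $C''$ covers $\mathcal T$: an edge $e$ of $\mathcal T-S$ with $j(e)\in J$ is covered at $s_{j(e)}$, one with $j(e)\notin J$ is covered by $C'$ exactly as above, and $S$ is covered at $s_i$ for $i\in J$. Since each side covers $S$ as well we have $\tau(\mathcal T)=r-1$, so $C''$ is a minimum cover of $\mathcal T$ that covers $\mathcal T-S$, hence $C''=V_\ell$ for some $\ell$ by the hypothesis; then $\{s_i:i\in J\}\subseteq V_\ell$ forces $J=\{\ell\}$, and a short count on sizes gives $C'=V_\ell\setminus\{s_\ell\}$, so $C=(V_\ell\setminus\{s_\ell\})\cup\{s_\ell'\}$. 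But this set is disjoint from $g_i$ for every $i\in[r]\setminus\{\ell\}$ (in the new side it contains only $s_\ell'\ne s_i'$, and among old vertices it contains no $s_j$ at all), a final contradiction. Hence $\tau(\mathcal H)=r$ and $\mathcal H$ is the desired intersecting $(r+1)$-Ryser hypergraph.

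Finally, I would record how the theorem is applied: taking $\mathcal T=\tpp{r}$ and $S$ one of its lines — after separately checking, from the standard incidence properties of truncated projective planes, that $\tau(\tpp{r}-S)=r-1$ and that the sides are its only minimum covers — the construction produces intersecting $(r+1)$-Ryser hypergraphs whenever a projective plane $\pp{r}$ of order $r-1$ exists, matching the range of uniformities announced in the introduction.
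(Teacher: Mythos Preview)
Your argument is correct and is essentially the paper's own proof specialised to the choice $F_1=\dots=F_r=S$: your lifted edges $\widehat e$ are exactly the paper's $\mathcal E_1$, your transversals $g_i=S-s_i+s_i'$ are the paper's $\mathcal E_3$ in this special case, and your ``replace each $s_i'\in C$ by $s_i$'' step is precisely the paper's cover-mirror Lemma~\ref{CoverMirror}. The paper's extra parameters $F_1,\dots,F_r$ are unnecessary for Theorem~\ref{MainTheorem} itself but are what drive the counting in Section~\ref{sec:manyextremals}.

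One small slip to fix: your $g_i=(S\setminus\{s_i\})\cup\{s_i'\}$ has $r$ vertices, not $r+1$, so $\mathcal H$ as you defined it is only $\{r,r+1\}$-uniform and $(r+1)$-partite (it misses side $V_i$ on $g_i$). The paper handles exactly this by first building the $\{r,r+1\}$-uniform object and then padding each $r$-edge with one fresh vertex in the missing side; this affects neither the intersecting property nor your cover-number argument, so the repair is immediate.
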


By the aid of the following Lemma, it is easy to see that for $r\geq 4$, the truncated projective plane
$\mathcal{T}_r$ together with an arbitrary hyperedge $S \in\mathcal{T}_r$ 
satisfy the conditions in Theorem~\ref{MainTheorem}.\\

\begin{lemma}~\label{TruncatedCovering}
  If $r \geq 4$, and $W \subset V(\mathcal{T}_r)$ such that $|W| = r
  -1$ and $W$ contains vertices from at least two sides of $\mathcal{T}_r$, 
  then $W$ covers at most $|\mathcal{T}_r| - 2$ of the edges of $\mathcal{T}_r$.
\end{lemma}

\begin{proof}
  It follows from the axioms of finite projective planes that
  $\mathcal{T}_r$ is an $(r-1)$-regular hypergraph with $(r-1)^2$
  edges.   Since $|W|\geq r-1 \geq 3$, by the conditions of the lemma $W$ must
  contain three vertices $w,v', v'' \in W$ such that $w$ is in not in
  the same side as the other two vertices $v'$ and $v''$. 
  It then also follows from the axioms of finite projective planes
  that $w$ is contained in an edge of $\mathcal{T}_r$ that contains
    $v'$, and in an edge that contains $v''$ (these two edges might be
    the same). In any case $w$ covers a common edge with $v'$ and a
    common edge with $v''$.  
Hence, $W$ will cover at most $(r-1)^2 - 2$ edges of $\mathcal{T}_r$. 
\end{proof}

Since explicit constructions of $\mathcal{T}_r$ are known when $r{-}1=q$ is  a prime
power, the following is immediate.

\medskip

\begin{corollary}\label{cor:MainCorollary}
  For any prime power $q$, there exists an intersecting  $(q+2)$-Ryser hypergraph.
\end{corollary}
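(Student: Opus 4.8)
The plan is to apply Theorem~\ref{MainTheorem} with $r$ taken to be $q+1$ and with $\mathcal T$ the truncated projective plane $\tpp{q+1}$, obtained from a projective plane $\pp{q+1}$ of order $q$ by deleting a point $v$ together with the $q+1$ lines through it. Since $q$ is a prime power, $\pp{q+1}$ exists, hence so does $\tpp{q+1}$, and it is $(q+1)$-partite, $(q+1)$-uniform and intersecting, its sides being the sets $V_i=\ell_i\setminus\{v\}$ for the lines $\ell_1,\dots,\ell_{q+1}$ through $v$. Choosing $S\in\tpp{q+1}$ to be an arbitrary edge, it then suffices to verify the three structural hypotheses of Theorem~\ref{MainTheorem}, after which the theorem delivers an intersecting $(q+2)$-Ryser hypergraph. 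We may assume $q\geq 3$, since for $q=2$ the hypergraph $\tpp{4}$ is itself an intersecting $4$-Ryser hypergraph; so set $r:=q+1\geq 4$. The hypothesis that $S$ meets every other edge in exactly one vertex will be immediate: two distinct lines of $\pp{r}$ meet in one point, and that point differs from $v$ whenever neither line passes through $v$, so it survives the truncation.

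The heart of the proof is the cover structure of $\tpp{r}-S$: I must show $\tau(\tpp{r}-S)=r-1$ and that its only covers of size $r-1$ are the sides $V_1,\dots,V_r$. I would argue this by double counting. Each vertex of $\tpp{r}$ is a point $p\neq v$ of $\pp{r}$ and lies on exactly $r-1$ lines of $\pp{r}$ avoiding $v$, hence on exactly $r-1$ edges of $\tpp{r}$, and $\tpp{r}$ has $(q^2+q+1)-(q+1)=(r-1)^2$ edges. Thus any $r-2$ vertices cover at most $(r-2)(r-1)<(r-1)^2-1$ edges, so $\tau(\tpp{r}-S)\geq r-1$, with equality because each side is a cover of size $r-1$. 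For the characterisation, let $C$ be a cover of $\tpp{r}-S$ with $|C|=r-1$; counting incidences between $C$ and the $(r-1)^2-1$ edges of $\tpp{r}-S$ yields a total of $(r-1)^2-|C\cap S|$, so $|C\cap S|\leq 1$ and at most one edge of $\tpp{r}-S$ is covered twice by $C$. Now any two distinct points of $C$ span a line of $\pp{r}$, which cannot be $S$ since $|C\cap S|\leq 1$; if that line avoids $v$ it is a doubly covered edge of $\tpp{r}-S$, which can occur for at most one pair. Hence all but at most one pair of points of $C$ are collinear with $v$, and since two lines through $v$ that share a further point coincide, a third point of $C$ (which exists as $|C|=r-1\geq 3$) forces all of $C\cup\{v\}$ onto one line through $v$, i.e.\ $C=V_i$ for some $i$. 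This is precisely where $r\geq 4$ enters.

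I expect the double counting together with the ``transitivity through $v$'' step to be the only real work; the rest is bookkeeping. The delicate point is the sub-case $|C\cap S|=0$, where one doubly covered edge is a priori allowed, and one has to check that the extra vertex of $C$ still yields a contradiction. Once all three hypotheses are confirmed for $\tpp{r}$ and $S$, Theorem~\ref{MainTheorem} produces the desired intersecting $(r+1)=(q+2)$-Ryser hypergraph.
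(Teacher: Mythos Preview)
Your proposal is correct and follows exactly the route the paper indicates: the paper deduces the corollary by asserting (without details) that for $r\geq 4$ the truncated projective plane $\tpp{r}$ together with any edge $S$ satisfies the hypotheses of Theorem~\ref{MainTheorem}, and you have supplied a clean double-counting verification of this claim, including a correct separate treatment of $q=2$ (where the cover hypothesis for $\tpp{3}-S$ indeed fails).
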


 Using this, one can argue that 
there are infinitely many values of uniformities for which Corollary~\ref{cor:MainCorollary} 
gives  a hypergraph that is tight for the bound in Ryser's Conjecture,   
but for which there can be no truncated projective plane
construction (see Section~\ref{sec:conclusion}).

The truncated projective plane construction provides a ready supply of graphs that satisfy the conditions of Theorem \ref{MainTheorem}. 
Kahn~\cite{KAH} proved that with high probability a 
randomly chosen $22 r \log r$ lines of $\pp{r}$ cannot be covered with less than $r$ points.
Boros, Sz\H onyi and Tichler~\cite{BSzT} modified Kahn's idea to show that the only covers of size $r$ are lines in this case. 
Therefore, Theorem~\ref{MainTheorem} can be applied to these settings as well.
However, the applications of the theorem is not limited to subgraphs of projective planes. 
For instance, it can be used on the following $8$-partite hypergraph, which was constructed in \cite{FHMW}, where it was labelled $\mathcal{H}_{38}$, and where it was proved that $\mathcal{H}_{38}$ is an $8$-Ryser hypergraph that is also intersecting and linear, but is not a subhypergraph of $\mathcal{T}_8$.

\begin{center}
  \doublespacing
  {
    
    \small
    \singlespace
    \begin{tabular}{l l l l l l l l }

      \texttt{bcdefgaa} & \texttt{begcdafb} & \texttt{bfegadcc} & \texttt{bgafcedd} & \texttt{cgbafdeb} & \texttt{cadgbefe} & \texttt{cdefgaba} & \texttt{cefbagdf} \\
      \texttt{dfbcgeaf} & \texttt{dgfebacg} & \texttt{dcgfabee} & \texttt{deagfcbh} & \texttt{efgabcda} & \texttt{edfgcbab} & \texttt{eabfdgch} & \texttt{ecabgdfg} \\
      \texttt{fedagbcd} & \texttt{fgebdcae} & \texttt{fagecdbf} & \texttt{fdacbgec} & \texttt{gcfadebc} & \texttt{gaecfbdg} & \texttt{gfdbcaeh} & \texttt{gdbeacfd} \\
      \texttt{aaaaaaai} & \texttt{bbbbbbbi} & \texttt{ccccccci} & \texttt{dddddddi} & \texttt{eeeeeeei} & \texttt{fffffffi} & \texttt{gggggggi} & \texttt{adgbfecj} \\
                        & \texttt{dbeacgfj} & \texttt{gecfbdaj} & \texttt{bafdgcej} & \texttt{fcbgeadj} & \texttt{egdcafbj} & \texttt{cfaedbgj} &                   \\ \\ 

    \end{tabular}
  }

\end{center}

We verified using the help of a computer that all 24 edges in the first three rows of the above presentation of $\mathcal{H}_{38}$ satisfy the conditions of edge $S$ in the statement of Theorem \ref{MainTheorem}, and consequently each allowing $\mathcal{H}_{38}$ to be used to construct a $9$-partite extremal hypergraph. 

Moreover, apart from the above direct applications, the construction that we use to prove Theorem~\ref{MainTheorem} is flexible enough 
to give not only one, but many non-isomorphic Ryser hypergraphs. 
This highlights the difficulty of Ryser's Conjecture, since a proof of it would eventually need to deal with all these extremal 
constructions. 
Define an intersecting $r$-Ryser hypergraph to be \emph{minimal} if the deletion of any edge produces a hypergraph with cover number $r-2$. 
We prove that there are many minimal intersecting $r$-Ryser hypergraphs.

\medskip 

\begin{theorem}\label{counting}
There is an infinite sequence of integers $r$, for which there are 
$\exp (r^{0.5 - o(1)})$ non-isomorphic minimal intersecting $r$-Ryser hypergraphs.
\end{theorem}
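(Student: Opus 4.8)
The plan is to build the $r$-Ryser hypergraphs as outputs of the construction in Theorem \ref{MainTheorem}, starting from truncated projective planes $\mathcal{T}_r$ together with a choice of edge $S$ and, crucially, some additional combinatorial data (a choice of how to "split" the plane) that the flexible construction allows. So I would first isolate from the proof of Theorem \ref{MainTheorem} the precise menu of choices: for a fixed $\mathcal{T}_r$ and a fixed $S$, the construction should depend on a selection of subsets or a partition of the $r-1$ non-$S$ vertices lying on the removed line, or equivalently on a coloring/partition of some substructure of $\mathcal{P}_r$ of size roughly $r$. Each such choice yields an intersecting $(r+1)$-Ryser hypergraph, and these are minimal (or can be passed to a minimal subhypergraph) essentially because $\mathcal{T}_r$ already has the property that deleting an edge drops the cover number, and the construction preserves this.

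Next, I would count the choices: if the construction is parametrized by, say, partitions of an $r$-element set into blocks of prescribed sizes, the number of such parameters is $\exp(\Theta(r\log r))$, far more than the claimed $\exp(r^{0.5-\delta})$; even a much weaker parametrization (a subset of an $\sqrt{r}$-sized set, or a partition of a $\sqrt{r}$-sized set) already gives $\exp(\Theta(\sqrt r))$ possibilities. The target bound $\exp(r^{0.5-\delta})$ is deliberately weak, which suggests the real constraint is not the number of parameters but controlling when two different parameters give \emph{isomorphic} hypergraphs. So the key step is an automorphism/isomorphism-reduction argument: the number of hypergraphs on $n$ vertices that are isomorphic to a fixed one is at most $n!$, and here $n = O(r^2)$, so at most $\exp(O(r^2 \log r))$ — but that is the wrong direction. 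Instead I would argue that two parameters give isomorphic hypergraphs only if they are related by an automorphism of the underlying $\mathcal{P}_r$ fixing $S$; since $|\mathrm{Aut}(\mathcal{P}_r)| = \exp(O(r\log r))$ for the Desarguesian planes, the number of isomorphism classes is at least (number of parameters)$/\exp(O(r\log r))$. Choosing the parametrizing set to have size a small power of $r$, say $r^{1/2}$ blocks each giving a binary choice, we get $\exp(\Theta(r^{1/2}))/\exp(O(r\log r))$ — still the wrong sign.

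This tension is exactly why the statement only claims $\exp(r^{0.5-\delta})$ and only for \emph{an infinite sequence} of $r$: the resolution is to use a parametrization whose size \emph{beats} $|\mathrm{Aut}(\mathcal{P}_r)|$, i.e. to encode the choices into a part of the structure on which $\mathrm{Aut}(\mathcal{P}_r)$ acts with small orbits, or to choose $r$ in a range where a good lower bound on the number of non-isomorphic outputs is available directly. Concretely, I would look for a sub-configuration of $\mathcal{P}_r$ — for instance a set of $\Theta(r^{2-\delta'})$ flags, or the incidences along a fixed line together with a fixed point — such that the construction depends on a $2$-coloring of it, giving $\exp(\Theta(r^{2-\delta'}))$ parameters, which dominates $\exp(O(r\log r)) \geq |\mathrm{Aut}(\mathcal{P}_r)|$; then the count of isomorphism classes is at least $\exp(\Theta(r^{2-\delta'}))/r! \geq \exp(r^{0.5-\delta})$ for the sequence of prime-power-plus-one values of $r$ and a suitable $\delta' = \delta'(\delta)$. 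I would then verify minimality is preserved under the construction (or pass to minimal subhypergraphs, checking that the isomorphism count does not collapse by more than a factor $\exp(o(r^{0.5-\delta}))$ — this is safe because passing to a minimal subhypergraph is canonical enough), and conclude.

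The main obstacle I anticipate is precisely the isomorphism-counting: it is easy to produce exponentially many \emph{labeled} outputs of the construction, but one must exhibit a genuinely large family that remains large after quotienting by hypergraph isomorphism, and after the (possibly lossy) step of passing to minimal subhypergraphs. Getting the quantitative bookkeeping so that the surviving count is at least $\exp(r^{0.5-\delta})$ — rather than being swamped by the $\exp(O(r\log r))$ automorphisms of the plane or by the reduction to minimal subhypergraphs — is the delicate part, and is likely the reason the theorem is stated with the modest exponent $0.5-\delta$ and only along a subsequence of uniformities.
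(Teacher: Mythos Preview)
Your overall strategy --- use the flexible construction of Theorem~\ref{MainTheorem} on $\mathcal{T}_r$, count the choices of $(F_1,\dots,F_r)$, then control how many choices collapse under hypergraph isomorphism --- is the right shape. But the isomorphism-counting step in your plan misses the key mechanism and, as written, does not go through.

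First, a quantitative error: for the Desarguesian plane, $|\mathrm{Aut}(\mathcal{P}_r)| = |P\Gamma L(3,q)|$ is \emph{polynomial} in $r$, not $\exp(O(r\log r))$. So the arithmetic you sketch (``parameters divided by automorphisms'') would in fact work comfortably --- \emph{if} you could justify that any isomorphism between two outputs $\mathcal{H}(\mathcal{T}_r,S,F_1,\dots,F_r)$ and $\mathcal{H}(\mathcal{T}_r,S,G_1,\dots,G_r)$ is induced by a plane automorphism. You do not justify this, and the real difficulty is worse: the theorem is about \emph{minimal} Ryser hypergraphs, so you must first pass from each $\mathcal{H}(\mathcal{T}_r,S,F_1,\dots,F_r)$ to some minimal subhypergraph, and after deleting edges there is no reason an isomorphism should extend to one of the full plane. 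Your remark that ``passing to a minimal subhypergraph is canonical enough'' is exactly the gap.

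The paper resolves both issues at once with a structural invariant rather than an automorphism count. Inside $\mathcal{H}(\mathcal{T}_r,S,F_1,\dots,F_r)$ the $2r$ edges $F_i$ and $F_i-s_i+v_i$ form a subhypergraph $\mathcal{S}(\mathcal{T}_r,S,F_1,\dots,F_r)$ with two crucial properties: (i) deleting any one of them creates a cover of size $r-1$ (namely $V_i$ or $V_i-s_i+v_i$), so $\mathcal{S}$ survives inside \emph{every} subhypergraph of cover number $r$, in particular every minimal one; and (ii) these are precisely the pairs of edges of $\mathcal{H}$ with intersection of size $r-1$, so any hypergraph isomorphism between two such minimal subhypergraphs must carry one $\mathcal{S}$ onto the other (Lemma~\ref{Sisomorphism}). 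This reduces the problem to producing many non-isomorphic $\mathcal{S}$'s, which the paper does by engineering $\exp(r^{0.5-\delta})$ distinct \emph{degree sequences} for $\mathcal{S}$ via the choice of $F_1,\dots,F_r$ (Lemma~\ref{CountingS}). No division by automorphism groups is needed, and the passage to minimal subhypergraphs costs nothing.
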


The notion of containment-maximal Ryser hypergraphs 
turns out to be more subtle, since these hypergraphs may be infinite. 
It is nevertheless possible to give a meaningful definition of the
concept, and prove that the number of maximal intersecting $r$-Ryser
hypergraphs is exponential in $\sqrt{r}$. We postpone
 the precise statement of the relevant theorem to~Section~\ref{sec:maximal}.

\section{New extremals from old} \label{sec:newextremals}
In this section, we prove Theorem~\ref{MainTheorem}. 
We define an $\{ r{-}1, r\}$-uniform hypergraph to be a family of sets of size $r{-}1$ and 
$r$. 
Notice that in order to find an $r$-uniform hypergraph $\mathcal H$ with 
$\tau(\mathcal H)=r{-}1$, 
it suffices to find an $\{ r{-}1, r\}$-uniform $\hyper H'$ with $\tau(\mathcal H')=r{-}1$.
Once we have such a hypergraph, we can construct an $r$-uniform hypergraph from 
$\hyper H'$ by adding a separate new vertex to each edge of size
$r{-}1$. For the rest of the paper, we also sometimes abuse notation
to enhance readability by omitting braces around singleton vertex
sets, so we write $F - s$ instead of $F\setminus \{s\}$ and $F+s$
instead of $F\cup \{ s\}$. 

Let $\mathcal T$ be an $r$-partite $r$-uniform intersecting hypergraph with sides $V_1, \dots, V_r$. 
Let $S = \{s_1,\ldots, s_r\}$ be an edge of $\mathcal{T}$, with $s_i\in V_i$, 
that satisfies the conditions in Theorem~\ref{MainTheorem}.
Let $F_1, \dots, F_r$  be  $r$ edges of $\mathcal T$ with $s_i\in F_i\cap S$ for each $i$, and also  $(F_i- s_i)\cap (F_j- s_j)\neq \emptyset$ for all $i,j$. 
The edges $F_1, \dots, F_s$ do not have to be distinct --- one possibility is to take $F_1=\dots=F_r=S$.

We define an $\{ r, r{+}1\}$-uniform, intersecting hypergraph 
$\mathcal H(\mathcal T, S, F_1, \dots, F_r)$,  which has cover number $r$.

\begin{itemize}
\item The vertex set of $\mathcal H(\mathcal T, S, F_1, \dots, F_r)$ consists  of 
the vertex set of $\mathcal T$ together with $r$ vertices $v_1, \dots, v_r$ in side $V_{r+1}$.

\item For an edge $E\neq S$ of $\mathcal T$ satisfying $E\cap S= \{s_i\}$, we define 
$\hat E= E+v_i$.
That is, $\hat E$ is an $(r+1)$-edge built from $E$ by adding the vertex $v_i$ corresponding to the vertex of $S$ which $E$ contains. 
Notice that $\hat E$ is well-defined since $S$ intersects any other edge of $\mathcal{T}$ 
in exactly one vertex.

\smallskip
  
Define 
\begin{align*}
\mathcal E_1 &=\{\hat E: E\in \hyper T-S\}\\
\mathcal E_2 &=\{F_i: i=1,\dots, r\}\\
\mathcal E_3 &= \{F_i-s_i+v_i:i=1,\dots,r\}.
\end{align*}
We let $\mathcal H(\mathcal T, S, F_1, \dots, F_r) = 
\mathcal E_1\cup\mathcal E_2\cup\mathcal E_3$.
\end{itemize}

In other words $\mathcal H(\mathcal T, S, F_1, \dots, F_r)$ has three parts: 
The first part consists of taking the $(r+1)$-edges $\hat E$ for all $E\in \mathcal T$ 
other than $S$. 
The second part consists of the $r$-edges $F_i$. 
The third part consists of the $r$-edges created from $F_1, \dots, F_r$ 
by deleting for each $F_i$ the designated vertex $s_i$ from its
intersection with $S$, 
and then adding to it the corresponding vertex $v_i$.

First we show that these hypergraphs are intersecting.

\medskip 

\begin{lemma}\label{LemmaIntersecting}
$\mathcal H(\mathcal T, S, F_1, \dots, F_r)$ is intersecting.
\end{lemma}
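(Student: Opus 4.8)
The plan is to verify intersection for every unordered pair of edges of $\mathcal H(\mathcal T, S, F_1,\dots,F_r)$ by splitting into cases according to which of the three families $\mathcal E_1,\mathcal E_2,\mathcal E_3$ the two edges come from. The central observation to keep in hand throughout is that $\mathcal T$ is intersecting, so any two edges $E,E'$ of $\mathcal T$ share a vertex, and that the ``hatting'' operation $E\mapsto \hat E$ only adds a vertex, so it can never destroy an existing intersection inside $V_1\cup\dots\cup V_r$. The only way an intersection can be lost is when we pass from $F_i$ to $F_i-s_i+v_i$ and the shared vertex happened to be exactly $s_i$; so the real content lies in the cases involving $\mathcal E_3$.

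First I would dispatch the easy cases. Two edges of $\mathcal E_1$: write them $\hat E=E+v_i$ and $\hat E'=E'+v_j$; since $E\cap E'\neq\emptyset$ in $\mathcal T$, also $\hat E\cap\hat E'\neq\emptyset$. An edge of $\mathcal E_1$ with an edge of $\mathcal E_2$: $\hat E=E+v_i$ and $F_j$; since $E,F_j\in\mathcal T$ they meet, and that meeting vertex lies in $V_1\cup\dots\cup V_r$, hence in both $\hat E$ and $F_j$. Two edges of $\mathcal E_2$: $F_i\cap F_j\neq\emptyset$ because both are edges of $\mathcal T$. (One should note that if $F_i$ and $F_j$ happen to be the same edge this is trivial.) These cases use nothing beyond ``$\mathcal T$ is intersecting''.

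Next the cases touching $\mathcal E_3$, where I expect the actual work. Consider $\hat E=E+v_i\in\mathcal E_1$ and $F_j-s_j+v_j\in\mathcal E_3$. If $i=j$ then both contain $v_i$ and we are done. If $i\neq j$, then $E\cap S=s_i\neq s_j$, so when $E$ meets $F_j$ (which it does, in $\mathcal T$) the common vertex cannot be $s_j$ — indeed $s_j\notin E$ since $E\cap S=\{s_i\}$ — hence that common vertex survives in $F_j-s_j$ and lies in both edges. For two edges of $\mathcal E_3$, say $F_i-s_i+v_i$ and $F_j-s_j+v_j$: if $i=j$ they share $v_i$; if $i\neq j$, this is exactly where the extra hypothesis on the $F$'s is needed, namely $(F_i-s_i)\cap(F_j-s_j)\neq\emptyset$, which is built into the construction. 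Finally, $F_i\in\mathcal E_2$ with $F_j-s_j+v_j\in\mathcal E_3$: here $F_i$ and $F_j$ are edges of $\mathcal T$, so they meet; I must argue the meeting point is not $s_j$. If $i=j$ this is a non-issue ($F_i=F_j$ share, e.g., $s_i$, wait — $F_i$ contains $s_i$ but $F_j-s_j$ does not; instead use $(F_i-s_i)\cap(F_j-s_j)=(F_i-s_i)\cap(F_i-s_i)\neq\emptyset$ when $i=j$, since each $F_i-s_i$ is nonempty as $r\geq 2$), and if $i\neq j$ we again invoke $(F_i-s_i)\cap(F_j-s_j)\neq\emptyset$ so $F_i$ and $F_j-s_j+v_j$ share a vertex in $V_1\cup\dots\cup V_r$.

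The main obstacle, such as it is, is purely bookkeeping: making sure that in every case involving an edge derived from some $F_k$ by deleting $s_k$, the common vertex guaranteed by ``$\mathcal T$ intersecting'' is not the deleted vertex $s_k$ — and realizing that the construction has been rigged precisely so that this never fails, via the hypothesis $(F_i-s_i)\cap(F_j-s_j)\neq\emptyset$ for all $i,j$ together with the fact that $s_j\notin E$ whenever $E\cap S\neq\{s_j\}$. No delicate estimate is involved; once the case split is organized, each case is a one-line check.
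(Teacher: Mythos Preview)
Your proof is correct and follows essentially the same approach as the paper: a case analysis over the three families $\mathcal E_1,\mathcal E_2,\mathcal E_3$, using that $\mathcal T$ is intersecting for the easy cases and the hypothesis $(F_i-s_i)\cap(F_j-s_j)\neq\emptyset$ for the cases involving $\mathcal E_3$. The paper's write-up is slightly more compact---it handles all of $\mathcal E_1\cup\mathcal E_2$ at once by observing that these edges restrict to $\mathcal T-S$ on the first $r$ sides, and for $\mathcal E_1$ versus $\mathcal E_3$ it argues by whether the common vertex $x\in E\cap F_i$ equals $s_i$ rather than splitting on the indices---but the content is identical.
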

\begin{proof}
The hypergraph induced by $\mathcal E_1 \cup \mathcal E_2$ is intersecting since its restriction to the first $r$ sides gives 
a subhypergraph of $\mathcal T$, which is an intersecting hypergraph. 
Furthermore for any $i$ and $j$, we have 
$(F_i-s_i+v_i)\cap (F_j-s_j+v_j) \supseteq (F_i- s_i)\cap (F_j- s_j) \neq \emptyset$ by assumption.
Therefore the hypergraph induced by $\mathcal E_2 \cup \mathcal E_3$ is intersecting.

It remains to show that edges in $\mathcal E_1$ intersect those in $\mathcal E_3$.
That is, $\hat E\cap (F_i-s_i+v_i)\neq \emptyset$ for any 
$E\neq S$ and $i=1, \dots, r$. 
Since $\mathcal T$ is intersecting, there is some vertex $x\in E\cap F_i$. 
If $x\neq s_i$, then $x\in \hat E\cap (F_i-s_i+v_i)$. 
Otherwise $v_i\in  \hat E\cap (F_i-s_i+v_i)$.
\end{proof}

We show that the covers of the hypergraph  $\mathcal H(\mathcal T, S, F_1, \dots, F_r)$  
have a very specific structure.

\medskip 

\begin{lemma}\label{CoverMirror}
If $C$ is a cover of $\mathcal H(\mathcal T, S, F_1, \dots, F_r)$, 
then $C'= (C\cup \{s_i: v_i\in C\}) \setminus \{v_1, \dots, v_r\}$ 
is a cover of $\hyper T-S$.
\end{lemma}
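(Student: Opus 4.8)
The plan is to show directly that the modified set $C'$ meets every edge of $\hyper T-S$. Recall that the edges of $\hyper T-S$ are exactly the sets $E$ with $E\neq S$, $E\in\hyper T$, and recall $\hat E = E+v_i$ where $E\cap S=\{s_i\}$. First I would observe that since $C$ is a cover of $\mathcal H(\mathcal T, S, F_1, \dots, F_r)$, it meets every $\hat E\in\mathcal E_1$. Fix an arbitrary edge $E\neq S$ of $\hyper T$, with $E\cap S=\{s_i\}$, so that $\hat E=E+v_i\in\mathcal E_1$. Since $C$ covers $\hat E$, there is a vertex $x\in C\cap\hat E = C\cap(E+v_i)$.

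Now I would split into two cases according to whether $x=v_i$ or not. If $x\neq v_i$, then $x\in E\subseteq V_1\cup\dots\cup V_r$, so $x$ is not one of the vertices $v_1,\dots,v_r$ that get removed when passing from $C$ to $C'$; hence $x\in C'$ and $x\in E$, so $C'$ meets $E$. If $x=v_i$, then $v_i\in C$, so by the definition of $C'$ we have $s_i\in C'$; and since $s_i\in E$ (because $E\cap S=\{s_i\}$), again $C'$ meets $E$. In either case $C'\cap E\neq\emptyset$. As $E$ was an arbitrary edge of $\hyper T-S$, this shows $C'$ is a cover of $\hyper T-S$.

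I do not expect any serious obstacle here: the statement is essentially a bookkeeping translation, the point being that the construction replaces each $s_i$ by $v_i$ in a controlled way, and a cover can be pulled back by undoing that substitution. The only thing to be a little careful about is that $\hat E$ is well-defined --- i.e.\ that $E\cap S$ is a single vertex $s_i$ --- but this is guaranteed by the hypothesis that $S$ intersects every other edge of $\mathcal T$ in exactly one vertex, and it was already used in the construction. Note also that the lemma does not claim anything about the size of $C'$ relative to $C$ (indeed $|C'|$ could be smaller than $|C|$ if several $v_i$ collapse onto distinct $s_i$, or if $s_i$ was already in $C$); the size comparison will presumably be handled in a subsequent lemma, combined with the hypothesis that the only $(r{-}1)$-covers of $\hyper T-S$ are sides.
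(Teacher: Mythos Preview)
Your proof is correct and follows essentially the same approach as the paper: take an arbitrary edge $E$ of $\hyper T-S$, use that $C$ meets $\hat E$, and split into the two cases $x\neq v_i$ and $x=v_i$. The additional commentary about well-definedness of $\hat E$ and the size of $C'$ is accurate but not needed for the lemma itself.
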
 
\begin{proof}
Let $E$ be an arbitrary edge of $\hyper T-S$. 
We show that $E\cap C'\neq\emptyset.$ 
We know that $C\cap \hat E\neq \emptyset$, since 
$C$ is a cover of $\mathcal H(\mathcal T, S, F_1, \dots, F_r)$. 
Let $y$ be a vertex in $C\cap \hat E$. If $y\not \in\{v_1, \dots, v_r\}$, 
then $y\in C'$ which implies $C'\cap E\neq \emptyset$.
Otherwise $y = v_i$ for some $i$, which implies that $s_i\in C'\cap E$.
\end{proof}

We now prove that $\mathcal H(\mathcal T, S, F_1, \dots, F_r)$ has cover number $r$. 
This immediately implies Theorem~\ref{MainTheorem} (by taking $\mathcal{H}$ to be 
$\mathcal H(\mathcal T, S, F_1, \dots, F_r)$ with a new vertex added to each of its 
$r$-edges).

\medskip 

\begin{theorem}
The hypergraph $\mathcal H(\mathcal T, S, F_1, \dots, F_r)$ is an $(r+1)$-partite,
$\{ r, r+1\}$-uniform, intersecting hypergraph with  
$\tau(H(\mathcal T, S, F_1, \dots, F_r))=r$.
\end{theorem}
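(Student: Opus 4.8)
The plan is to verify the three asserted properties—$(r{+}1)$-partiteness, $\{r,r{+}1\}$-uniformity, and $\tau = r$—in increasing order of difficulty. The first two are essentially bookkeeping: every vertex of $\mathcal{T}$ lives in one of the sides $V_1,\dots,V_r$ and the new vertices $v_1,\dots,v_r$ live in $V_{r+1}$, so I only need to check that no edge of $\mathcal{E}_1\cup\mathcal{E}_2\cup\mathcal{E}_3$ picks up two vertices from the same side. For $\hat{E}=E+v_i$ this follows because $E$ is $r$-partite across $V_1,\dots,V_r$ and $v_i\in V_{r+1}$; for $F_i\in\mathcal{E}_2$ it is immediate since $F_i$ is an edge of $\mathcal{T}$; and for $F_i-s_i+v_i\in\mathcal{E}_3$ we remove the unique vertex $s_i\in V_i$ and replace it with $v_i\in V_{r+1}$, so partiteness is preserved. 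Uniformity is equally direct: edges in $\mathcal{E}_1$ have size $r+1$, while those in $\mathcal{E}_2$ and $\mathcal{E}_3$ have size $r$. Intersecting-ness is already handed to us by Lemma~\ref{LemmaIntersecting}.

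The substance is the cover number computation, which splits into an upper bound and a lower bound. For $\tau\le r$, I would exhibit an explicit cover of size $r$: by hypothesis the sides $V_1,\dots,V_r$ are covers of $\mathcal{T}-S$, and I claim that, say, $V_{r+1}' := \{v_1,\dots,v_r\}$ together with... actually a cleaner choice is to take a side $V_j$ of $\mathcal{T}-S$ and check it covers all three families. Every $\hat{E}\in\mathcal{E}_1$ with $E\in\mathcal{T}-S$ meets $V_j$ because $E$ does; every $F_i\in\mathcal{E}_2$ is an edge of $\mathcal{T}$ other than or equal to $S$—if $F_i\neq S$ it meets $V_j$ since $F_i\in\mathcal{T}-S$, and if some $F_i=S$ then $s_j\in V_j$ covers it; and for $F_i-s_i+v_i\in\mathcal{E}_3$, if $j\neq i$ the vertex of $F_i$ in $V_j$ survives the deletion, while if $j=i$ we need another argument. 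So a single side may not work when $j=i$ for some $i$; the safe route is instead to use $V_j$ for a $j$ such that $F_i\neq$ the problematic configuration, or simply to take the cover $\{s_1,\dots,s_r\}=S$ itself: $S$ meets every $\hat{E}$ (since $S\cap E=s_i$ gives $s_i\in\hat E$), meets every $F_i$ (as $s_i\in F_i$), and meets every $F_i-s_i+v_i$ only if $(F_i-s_i)\cap S\neq\emptyset$—not guaranteed. The robust choice is therefore $\{v_1,\dots,v_r\}$ is \emph{not} a cover of $\mathcal{E}_1$ in general either. I would settle this by taking $C = V_\ell$ for the side $V_\ell$ and separately verifying $\mathcal{E}_3$: note $(F_i-s_i)\cap(F_\ell - s_\ell)\neq\emptyset$ by hypothesis, so for $i\neq\ell$ the edge $F_i-s_i+v_i$ contains a vertex of $F_i$, and in particular meets $V_\ell$ via the guaranteed intersection with $F_\ell-s_\ell$ only if that vertex lies in $V_\ell$—which it need not. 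Given this friction, the genuinely clean size-$r$ cover is $\{s_1,\dots,s_{r-1}, v_r\}$-type mixtures, and I expect the intended cover is simply one of the sides $V_1,\dots,V_r$ of $\mathcal{T}$, with the $\mathcal{E}_3$ check done using that $F_i - s_i$ still meets $V_\ell$ for $\ell\neq i$ and, for $\ell = i$, that $v_i$ is not needed because we can instead pick the side $V_\ell$ with $\ell$ chosen so that $\mathcal{E}_3$ is handled—I will nail down the correct universal choice in the write-up, most likely $C=V_1$ after relabeling so that no $F_i$ equals $S$ in a way that breaks it, or by a small case analysis.

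For the lower bound $\tau\ge r$, I would argue by contradiction: suppose $C$ is a cover of $\mathcal{H}(\mathcal{T},S,F_1,\dots,F_r)$ with $|C|\le r-1$. Apply Lemma~\ref{CoverMirror} to get $C' = (C\cup\{s_i : v_i\in C\})\setminus\{v_1,\dots,v_r\}$, a cover of $\mathcal{T}-S$. Since replacing each $v_i\in C$ by $s_i$ does not increase cardinality and deleting the $v_i$'s only decreases it, $|C'|\le |C|\le r-1$. By hypothesis $\tau(\mathcal{T}-S)=r-1$, so in fact $|C'|=r-1$ and $C'$ is a minimum cover of $\mathcal{T}-S$; again by hypothesis the only such covers are the sides, so $C' = V_j$ for some $j\in\{1,\dots,r\}$. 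This forces $|C|=r-1$ with $C$ and $C'$ differing only by the $v_i\leftrightarrow s_i$ swaps, and in particular $C\subseteq V_j \cup \{v_1,\dots,v_r\}$ with the $v_i$ appearing in $C$ exactly when $s_i\in V_j$, i.e. only $v_j$ can appear. So $C$ is either $V_j$ itself or $(V_j\setminus\{s_j\})\cup\{v_j\}$. The final step—and the main obstacle—is to rule out both possibilities by producing an edge of $\mathcal{H}$ that $C$ misses. If $C=V_j$: the edge $F_j - s_j + v_j\in\mathcal{E}_3$ meets $V_j$ only if $(F_j-s_j)\cap V_j\neq\emptyset$, but $F_j\cap V_j=\{s_j\}$ by $r$-partiteness of $\mathcal{T}$, so $F_j-s_j$ is disjoint from $V_j$ and hence from $C=V_j$—contradiction. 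If $C=(V_j\setminus\{s_j\})\cup\{v_j\}$: I need an edge meeting neither $V_j\setminus\{s_j\}$ nor $v_j$; the candidate is some $\hat E$ or $F_i$ whose unique vertex in $V_j$ is exactly $s_j$ and which does not contain $v_j$. Take $E\in\mathcal{T}-S$ with $E\cap V_j = \{s_j\}$—this is where I would invoke the structure of $\mathcal{T}$ (for instance via $F_j$, noting $F_j\in\mathcal{T}$, $F_j\cap V_j=\{s_j\}$, and if $F_j\neq S$ then $\hat{F_j}=F_j+v_j$... which does contain $v_j$, so that fails; instead use $F_j\in\mathcal{E}_2$ directly, an $r$-edge with $F_j\cap V_j=\{s_j\}$ and $v_j\notin F_j$, so $F_j\cap C=\emptyset$)—contradiction. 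Thus both cases collapse and $\tau\ge r$, completing the proof. The delicate point throughout is choosing, for each shape of the hypothetical small cover, the right witnessing edge among $\mathcal{E}_1\cup\mathcal{E}_2\cup\mathcal{E}_3$, and making sure the $F_i$'s (which may coincide with $S$) are handled uniformly; I expect to spend most of the argument there.
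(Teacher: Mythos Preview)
Your lower-bound argument for $\tau\ge r$ is correct and is exactly the paper's proof: reduce a hypothetical cover $C$ of size $\le r-1$ to a cover $C'$ of $\mathcal{T}-S$ via Lemma~\ref{CoverMirror}, conclude $C'=V_j$ for some $j$, deduce $C\in\{V_j,\ V_j-s_j+v_j\}$, and kill each possibility with the edge $F_j-s_j+v_j\in\mathcal{E}_3$ and the edge $F_j\in\mathcal{E}_2$ respectively. After the detours you land on precisely the two witnessing edges the paper uses.

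The one place you genuinely flounder is the upper bound $\tau\le r$, where you try sides, $S$, $\{v_1,\dots,v_r\}$, and mixtures without settling on anything. This is unnecessary: you have already recorded (via Lemma~\ref{LemmaIntersecting}) that $\mathcal{H}$ is intersecting, and $\mathcal{E}_2\cup\mathcal{E}_3$ consists of edges of size $r$. In any intersecting hypergraph every edge is itself a cover, so any $F_i\in\mathcal{E}_2$ (or any edge of $\mathcal{E}_3$) is a cover of size $r$. The paper does not even spell this out; it simply proves $\tau\ge r$ and takes $\tau\le r$ as implicit from the intersecting property. Replace your entire upper-bound paragraph with that one line and the proof is complete and essentially identical to the paper's.
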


\begin{proof}
It is immediate that $\mathcal H(\mathcal T, S, F_1, \dots, F_r)$ is $(r+1)$-partite and
$\{ r, r+1\}$-uniform --- the $r$-edges $E\in {\cal T} -S$ 
just gained a vertex $v_i$ in $V_{r+1}$ in order to become $\hat E$, 
whereas the $r$-edges $F_i$ had vertex $s_i$ deleted in $V_i$, and  
$v_i$ added in $V_{r+1}$. From Lemma~\ref{LemmaIntersecting},  $\mathcal H(\mathcal T, S, F_1, \dots, F_r)$ is intersecting.

It remains to prove that $\mathcal H(\mathcal T, S, F_1, \dots, F_r)$ 
has cover number $r$. 
Suppose to the contrary that there is a cover $C$ of $\mathcal H(\mathcal T, S, F_1, \dots, F_r)$ with $|C|\leq r{-}1$. 
Now $C'$ is a cover of $\mathcal T-S$ by Lemma~\ref{CoverMirror},
and $|C'|\leq |C|\leq r{-}1$. 
By the assumption of Theorem~\ref{MainTheorem}, the cover $C'$ must be one of the 
sides $V_i$ for some $i\in\{1, \dots, r\}$.
Now the definition of $C'$ implies that the cover $C$ is either $V_i$ or $V_i-s_i+v_i$.
In the first case, $C$ does not cover the edge $F_i-s_i+v_i$, while in the second
case, $C$ does not cover the edge $F_i$, both contradicting the 
assumption that $C$ is a cover of 
$\mathcal H(\mathcal T, S, F_1, \dots, F_r)$.
\end{proof}

\section{Many minimal examples} \label{sec:manyextremals}

The goal of  this section is to prove Theorem~\ref{counting}. 
Using the notation of Section \ref{sec:newextremals}, 
let $\mathcal S(\mathcal T_r, S, F_1, \dots, F_r)$ be the following hypergraph:
the vertex set consists of the vertices of $\mathcal T_r$ in sides $V_1,\dots, V_r$ together with $r$ vertices $v_1, \dots, v_r$ in side $V_{r+1}$.
The edge set consists of $\mathcal E_2\cup \mathcal E_3$, where $S=\{s_1,\ldots, s_r\}$ is a fixed hyperedge of $\mathcal T_r$ and 
$F_1, \dots, F_r$  are  $r$ edges of $\mathcal T_r$ with $s_i\in F_i\cap S$ for each $i$, and also  $(F_i- s_i)\cap (F_j- s_j)\neq \emptyset$ for all $i,j$.
First we prove a lemma which
implies that it is sufficient to find many non-isomorphic hypergraphs 
$\mathcal S(\mathcal T_r, S, F_1, \dots, F_r)$.

\medskip

\begin{lemma}\label{Sisomorphism}
Let $S=\{s_1, \dots, s_r\}, F_1, \dots, F_r, G_1, \dots,  G_r$ be
edges of the truncated projective plane $\tpp{r}$ with $S\cap F_i=\{s_i\}=S\cap G_i$.
If there is a subhypergraph ${\cal H}_F$ of 
$\mathcal H(\tpp{r}, S, F_1, \dots, F_r)$ with cover number $\tau({\cal H}_F)=r$ 
which is isomorphic to a subhypergraph ${\cal H}_G$ of 
$\mathcal H(\tpp{r}, S, G_1, \dots, G_r)$ with cover number $\tau({\cal H}_G) = r$,
then  $\mathcal S(\tpp{r}, S, F_1, \dots, F_r)$ is isomorphic to $\mathcal S(\tpp{r}, S, G_1, \dots, G_r)$.
\end{lemma}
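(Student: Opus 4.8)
The plan is to recover the isomorphism between the ``small'' hypergraphs $\mathcal S(\tpp{r},S,F_1,\dots,F_r)$ and $\mathcal S(\tpp{r},S,G_1,\dots,G_r)$ from the given isomorphism $\varphi\colon {\cal H}_F\to{\cal H}_G$, by first showing that $\varphi$ must respect the bipartition of the vertex set into the ``old'' vertices (those of $\tpp{r}$ other than $S$, together with $s_1,\dots,s_r$) and the ``new'' vertices $v_1,\dots,v_r$ in side $V_{r+1}$. The key structural observation is that the side $V_{r+1}=\{v_1,\dots,v_r\}$ is distinguished inside $\mathcal H(\tpp{r},S,F_1,\dots,F_r)$: it is an independent set of size $r$ that is \emph{not} a cover (since $\tau=r$ is witnessed only by sides $V_i$ of the old plane or their modifications, as in the proof of the previous theorem — more precisely $V_{r+1}$ fails to cover $F_i$ for every $i$), whereas I want to argue that the ``new'' side is nonetheless recognisable from the incidence structure alone. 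First I would pin down exactly how $v_i$ sits in the hypergraph: $v_i$ lies in the edges $\hat E$ for all $E$ with $E\cap S=s_i$, and in $F_i-s_i+v_i$, and in no $F_j$; dually $s_i$ lies in $F_i$ and in every $\hat E$ with $E\ni s_i$ turned into... wait, $\hat E$ does not contain $s_i$ unless $E$ did, so I need to be careful and instead use the edge $S$ itself is absent, but $F_i\ni s_i$. The cleanest route is: since $\tau({\cal H}_F)=\tau({\cal H}_G)=r$ and ${\cal H}_F$ is a subhypergraph of $\mathcal H(\tpp{r},S,F_1,\dots,F_r)$, any $r$-cover of ${\cal H}_F$ extends (via Lemma~\ref{CoverMirror}) to an $(r{-}1)$-cover of $\tpp{r}-S$, hence to a side; run this backwards to classify the minimum covers of ${\cal H}_F$ and ${\cal H}_G$, and note $\varphi$ must biject minimum covers with minimum covers.

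The main steps, in order, would be: (1) Show that both ${\cal H}_F$ and ${\cal H}_G$ \emph{contain} all of $\mathcal E_2\cup\mathcal E_3$ as edges — or at least that $\varphi$ forces the portion of ${\cal H}_F$ on which the relevant argument runs to include $\mathcal E_2\cup\mathcal E_3$. Here I expect I actually need the hypothesis more carefully: ${\cal H}_F$ is just \emph{some} subhypergraph with cover number $r$; the point of requiring $\tau=r$ is that such a subhypergraph is ``big enough'' to see the sides $V_i$ and the modified sides $V_i-s_i+v_i$ as its only minimum covers, and each edge $F_i$ (resp.\ $F_i-s_i+v_i$) is the \emph{unique} edge of $\mathcal H(\tpp{r},S,F_1,\dots,F_r)$ distinguishing $V_i-s_i+v_i$ from $V_i$ (resp.\ $V_i$ from $V_i-s_i+v_i$) as a cover, so both must lie in ${\cal H}_F$. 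This is the crux: once one knows $F_1,\dots,F_r,F_1-s_1+v_1,\dots,F_r-s_r+v_r\in {\cal H}_F$, one has $\mathcal S(\tpp{r},S,F_1,\dots,F_r)\subseteq {\cal H}_F$, and likewise on the $G$ side. (2) Argue $\varphi$ maps $V_{r+1}$ to $V_{r+1}$: the minimum covers of ${\cal H}_F$ are exactly $V_1,\dots,V_r,V_1-s_1+v_1,\dots,V_r-s_r+v_r$ (this needs that ${\cal H}_F$ contains enough of $\mathcal E_1$ to rule out any other $(r{-}1)$-set-plus-a-$v$ being a cover — again guaranteed since $\tau({\cal H}_F)=r$ precisely means every $(r{-}1)$-set fails); the element $v_i$ is characterised as the unique vertex lying in exactly one modified side $V_i-s_i+v_i$ among minimum covers and in no unmodified side, whereas $s_i$ lies in $V_i$ only; hence $\varphi(V_{r+1})$ is a union of ``new-type'' behaviour and must equal $V_{r+1}$. (3) Conclude that $\varphi$ induces a bijection $\{F_i,\ F_i-s_i+v_i\}\to\{G_j,\ G_j-s_j+v_j\}$ compatible with the pairing (an $r$-edge meeting $V_{r+1}$ in nothing is an $F$, one meeting it in a singleton $v$ is a modified $F$), giving the desired isomorphism of $\mathcal S(\tpp{r},S,F_1,\dots,F_r)$ with $\mathcal S(\tpp{r},S,G_1,\dots,G_r)$.

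The hard part will be step (1) together with the sharp form of step (2): proving that a subhypergraph of $\mathcal H(\tpp{r},S,F_1,\dots,F_r)$ with cover number exactly $r$ is forced to contain \emph{all} $2r$ edges of $\mathcal E_2\cup\mathcal E_3$ and enough of $\mathcal E_1$ that its minimum covers are precisely the $2r$ sides and modified sides. The mechanism I would use is a minimality/criticality argument analogous to the proof of the previous theorem: suppose $F_i\notin{\cal H}_F$; then $V_i$ is a cover of ${\cal H}_F$ of size $r{-}1$ only if the remaining edges of ${\cal H}_F$ incident to... here I must show $V_i$ actually becomes a cover, which requires that $F_i-s_i+v_i\notin{\cal H}_F$ as well or that every other edge meets $V_i$. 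Since every edge of $\mathcal E_1\cup\mathcal E_2$ other than $F_j$'s restricted to the old plane equals $\tpp{r}-S$ which is covered by $V_i$, and $F_j$ for $j\neq i$ meets $V_i$ (as $s_i\notin F_j$ but $F_j$ meets $V_i$ by $r$-partiteness only if it has a vertex there — not automatic!), I will instead invoke the explicit hypothesis that $(F_i-s_i)\cap(F_j-s_j)\neq\emptyset$ and the projective-plane property that two lines meet in a point to locate such intersections, and thereby show that dropping $F_i$ or its modification from ${\cal H}_F$ drops $\tau$ to $r{-}1$, contradicting $\tau({\cal H}_F)=r$. Packaging this cleanly — isolating precisely which edges of $\mathcal H(\tpp{r},S,\dots)$ are ``$\tau$-critical'' and showing they must survive in any $\tau=r$ subhypergraph — is where the real work lies; the rest is bookkeeping about which side is which.
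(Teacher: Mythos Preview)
Your Step (1) is exactly right and is the same as the paper's first step: if $F_i\notin\mathcal H_F$ then $V_i-s_i+v_i$ (a set of size $r-1$) covers $\mathcal H_F$, and if $F_i-s_i+v_i\notin\mathcal H_F$ then $V_i$ covers $\mathcal H_F$; either contradicts $\tau(\mathcal H_F)=r$. So $\mathcal S(\tpp{r},S,F_1,\dots,F_r)\subseteq\mathcal H_F$, and likewise on the $G$ side. Your ``hard part'' paragraph muddles which of $V_i$ and $V_i-s_i+v_i$ to use, but you already had it right earlier in the plan.

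Your Steps (2)--(3), however, take a detour that is both unnecessary and, as written, wrong. You claim the minimum covers of $\mathcal H_F$ are exactly $V_1,\dots,V_r,V_1-s_1+v_1,\dots,V_r-s_r+v_r$. But each $V_i$ has only $r-1$ vertices (a line of $\mathcal P_r$ minus the deleted point), and $\tau(\mathcal H_F)=r$, so these sets are \emph{not} covers at all --- that was precisely the content of Step (1). The genuine size-$r$ minimum covers of $\mathcal H_F$ are not so easy to pin down, especially since $\mathcal H_F$ is an arbitrary subhypergraph with $\tau=r$ and may omit many edges of $\mathcal E_1$. So the plan of identifying $V_{r+1}$ via minimum covers does not go through as stated.

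The paper bypasses all of this with a one-line observation: inside $\mathcal H(\tpp{r},S,F_1,\dots,F_r)$ the possible edge-intersection sizes are $1,2,r-1,r$ (using $r\ge 4$), and the only pairs of edges intersecting in exactly $r-1$ vertices are $(F_i,\,F_i-s_i+v_i)$. Any isomorphism $\varphi\colon\mathcal H_F\to\mathcal H_G$ preserves intersection sizes, hence must send each such pair to a pair $(G_j,\,G_j-s_j+v_j)$. That immediately gives the isomorphism $\mathcal S(\tpp{r},S,F_1,\dots,F_r)\cong\mathcal S(\tpp{r},S,G_1,\dots,G_r)$, with no need to analyse covers or to locate $V_{r+1}$ at all.
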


\begin{proof}
We claim that $\mathcal S(\tpp{r}, S, F_1, \dots, F_r)$ is contained in $\mathcal H_F$.
Indeed, if an edge $F_i-s_i+v_i$ was missing from ${\cal H}_F$, 
then the side $V_i$ would be a cover of size $r{-}1$.
Similarly, if  $F_i$ was missing, 
then $V_i - s_i+v_i$ would be a cover of size $r{-}1$.
By the same argument, $\mathcal S(\tpp{r}, S, G_1, \dots, G_r)$ is 
contained in $\mathcal H_G$.

Let $\phi$ be an isomorphism from $\mathcal H_F$ to $\mathcal H_G$.
We claim that $\phi$ induces an isomorphism from 
$\mathcal S(\tpp{r}, S, F_1, \dots, F_r)$ to $\mathcal S(\tpp{r}, S, G_1, \dots, G_r)$. 
Indeed, notice that in our construction the possible 
intersection sizes of edges in $\mathcal H(\tpp{r}, S, F_1, \dots, F_r)$  are $1, 2, r{-}1,$ and $r$ 
(since we are assuming $r\geq 4$).
There are only $r$ pairs of hyperedges that have intersection of size $r{-}1$: the 
pairs $F_i$ and $F_i - s_i +v_i$. This implies that any isomorphism must map
a pair of sets $F_i$ and $F_i-s_i+v_i$ into some pair $G_j, G_j-s_j+v_j$ (in some order), 
consequently the restriction of $\phi$ onto the vertex set of 
${\cal S}(\tpp{r},S, F_1, \ldots, F_r)$ is an
isomorphism between this hypergraph and ${\cal S}(\tpp{r},S, G_1, \ldots, G_r)$.
\end{proof}

We will combine Lemma~\ref{Sisomorphism} with the following.

\medskip 

\begin{lemma}\label{CountingS}
For every integer $r$ of the form $q+1$, where $q$ is a prime power, there are at least $\exp(r^{0.5 - o(1)})$
non-isomorphic hypergraphs $\mathcal S(\tpp{r}, S,
F_1, \dots, F_r)$ for the 
different choices of edges  $S, F_1, \dots, F_r\in \tpp{r}$.
\end{lemma}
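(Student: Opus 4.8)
The plan is to work inside a single truncated projective plane $\tpp{r}$ with $r=q+1$, fix one line $S=\{s_1,\dots,s_r\}$, and choose the edges $F_1,\dots,F_r$ so that the resulting hypergraphs $\mathcal S(\tpp{r},S,F_1,\dots,F_r)$ are controlled by a combinatorial invariant that we can realize in many inequivalent ways. Concretely, for each $i$ pick $F_i$ to be a line through $s_i$ other than $S$; the interesting data is then the pattern of the pairwise intersection points $(F_i-s_i)\cap(F_j-s_j)$ together with which of these coincide. Since $\tpp{r}$ comes from a projective plane of order $q$, through each $s_i$ there are exactly $q$ lines besides $S$, and two such lines $F_i, F_j$ (for $i\neq j$) always meet in a unique point, which does not lie on $S$ provided $F_i\neq F_j$ as lines — this is where the condition $(F_i-s_i)\cap(F_j-s_j)\neq\emptyset$ from Section~\ref{sec:newextremals} is automatic. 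So every admissible choice of an $r$-tuple of lines through the respective $s_i$ gives a valid hypergraph $\mathcal S$, and the number of such tuples is roughly $q^r$.

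The key step is to extract from $\mathcal S(\tpp{r},S,F_1,\dots,F_r)$ an isomorphism invariant that distinguishes exponentially many (in fact $q^{\Theta(r)}$, which dwarfs the claimed $r^{0.5-\delta}$) of these tuples. The natural candidate is the "intersection hypergraph": on vertex set $\{1,\dots,r\}$ record, for each point $p$ lying on at least two of the lines $F_1,\dots,F_r$, the set $\{i : p\in F_i\}$. Because $\mathcal S$ has edges $F_i$ and $F_i-s_i+v_i$ and these are the only pairs meeting in $r-1$ vertices (for $r\geq 4$), any isomorphism of $\mathcal S$'s permutes the indices $i$ and respects this intersection pattern; conversely the pattern determines $\mathcal S$ up to relabelling the "generic" vertices. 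Thus it suffices to count, up to the action of $S_r$ on $\{1,\dots,r\}$, how many distinct intersection patterns arise from $r$-tuples of lines. Even restricting to tuples where all $\binom r2$ intersection points are distinct, the pattern is a single "generic" configuration, so that does not help; instead I would engineer tuples in which a prescribed subset of the lines is concurrent (all passing through one common point off $S$) while the rest are in general position. For a fixed concurrency point $p\notin S$, the lines through $p$ hitting distinct $s_i$ form a partial pencil, and by choosing which indices participate we build patterns whose "concurrency type" is a controllable set partition-like object; the number of these, up to $S_r$, is already super-polynomial.

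The main obstacle — and the step that needs the most care — is verifying that the intersection pattern is genuinely an isomorphism invariant of $\mathcal S$ alone, i.e. that no "accidental" isomorphism collapses two different patterns. For this I would argue exactly as in the proof of Lemma~\ref{Sisomorphism}: the $r$ distinguished pairs $(F_i, F_i-s_i+v_i)$ are the unique pairs at intersection $r-1$, so an isomorphism $\psi:\mathcal S_F\to\mathcal S_G$ induces a permutation $\sigma\in S_r$ with $\psi$ sending the pair indexed by $i$ to the pair indexed by $\sigma(i)$; tracking where the common vertices of these pairs (the $v_i$'s, the $s_i$'s) and the cross-intersection points go then forces the $F$-intersection pattern to equal the $G$-pattern after relabelling by $\sigma$. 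Once this is established, a crude lower bound on the number of $S_r$-orbits of realizable concurrency patterns — say by counting tuples realizing a pencil of size between $r/3$ and $2r/3$ through a fixed point, of which there are at least $(q/2)^{r/3}$, then dividing by $r!$ — already yields far more than $r^{0.5-\delta}$ for large $r$, so the quantitative part is easy; the whole weight of the argument is in pinning down the invariant and in checking, using that $q$ is large, that enough of the desired configurations actually occur inside a single $\tpp{r}$.
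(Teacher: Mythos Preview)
Your invariant is the right one and is essentially what the paper uses: the degree of a vertex $p$ of $\mathcal S$ (other than the $s_i,v_i$, which have degree~$1$) is exactly $2\cdot|\{i:p\in F_i\}|$, so the degree sequence of $\mathcal S$ already encodes the multiset of concurrency-class sizes of the $F_i$. Your verification that an isomorphism of $\mathcal S$'s must permute the $r$ pairs $\{F_i,F_i-s_i+v_i\}$ and hence carry the concurrency pattern to the concurrency pattern is correct. Where your proposal breaks down is the counting. The quantity $(q/2)^{r/3}$ counts \emph{tuples} $(F_1,\dots,F_r)$, not concurrency patterns and not isomorphism classes; dividing by $r!$ would be legitimate only if at most $r!$ tuples yielded the same isomorphism type of $\mathcal S$, but that is false --- vastly many tuples (e.g.\ all those with the remaining lines ``in general position'') give the same pattern, and indeed $(q/2)^{r/3}/r!\to 0$ since $q=r-1$. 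So the ``crude lower bound'' you propose does not bound the number of isomorphism classes at all. A related gap is that you never actually verify that a pencil of prescribed size $k$ can be realized with no \emph{larger} accidental concurrency among the remaining $F_j$; without this, even the coarse ``maximum pencil size'' invariant is not shown to take the value you want.

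The paper avoids both problems by working directly with degree sequences and giving an explicit construction: fix $t=\lfloor r^{0.5-\delta}\rfloor$ points $w_1,\dots,w_{t+1}\in V_1\setminus\{s_1\}$, partition $S\setminus\{s_1\}$ into sets $S_1,\dots,S_{t+1}$ of prescribed sizes $x_1,\dots,x_{t+1}$, and for $j\in S_i$ take $F_j$ to be the unique line through $s_j$ and $w_i$. This forces the $V_1$-degree sequence of $\mathcal S$ to be $(1,2x_1,\dots,2x_{t+1})$, while a one-line argument (any two of the $F_j$ share at most one vertex, and their $V_1$-vertices lie in a set of size $t+2$) bounds every degree outside $V_1$ by $2t+4$. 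Choosing each $x_i\in(t+2,r^{1/2}]$ then makes the multiset $\{x_1,\dots,x_t\}$ recoverable from the degree sequence, and the number of such multisets is $r^{\Omega(r^{0.5-\delta})}$. (Note also that the paper actually proves --- and later uses --- this exponential bound; the polynomial $r^{0.5-\delta}$ in the lemma statement is a visible understatement.) The missing ingredient in your sketch is precisely this kind of explicit, controllable construction that produces many \emph{distinguishable patterns}, as opposed to many tuples.
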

\begin{proof}
We give a lower bound on the number of non-isomorphic hypergraphs $\mathcal S(\tpp{r}, S, F_1, \dots, F_r)$ by showing that 
there are at least $\exp(r^{0.5 - o(1)})$ distinct degree sequences which can occur in such hypergraphs.

Let us choose $t=\left\lfloor \frac{r^{0.5}}{2} \right\rfloor$ positive integers $x_1, \ldots , x_t$ such that
$t+2 < x_i \leq \lfloor r^{0.5}\rfloor$  for all $i =1, \ldots , t$ and let 
$x_{t+1} = r{-}1 - x_1- \cdots - x_t$. 
We select appropriate edges $F_1, \ldots , F_{r}$ of $\tpp{r}$ for our construction
such that the degree sequence of the non-isolated 
vertices of $V_1$ in $\mathcal S(\tpp{r}, S, F_1, \dots, F_r)$ is $(1, 2x_1, \ldots , 2x_t, 2x_{t+1})$. 
 This is possible since we can partition $S\setminus \{ s_1\}$ 
 into $t+1$ sets $S_1, \ldots , S_{t+1}$ such that $|S_i| = x_i$, and as $F_j$, we 
 select the line connecting  $s_j$ to the $i$th vertex $w_i$ of $V_1\setminus \{ s_1\}$, where
 $j\in S_i$. (Recall that for any  two vertices in different sides of $\tpp{r}$ 
 there is exactly one line passing through both of them.) 
Then the degree of $w_i$ in $\mathcal S(\tpp{r}, S, F_1, \dots, F_r)$ is $2x_i$.
We choose $F_1\neq S$ to be an arbitrary line passing through $s_1$. 
Now the vertex degrees in $V_1$ are as promised. 

We observe that all degrees of $\mathcal S(\tpp{r}, S, F_1, \dots, F_r)$ are at most $2t+4$ in the other sides, since there are
exactly $t+2$ vertices with non-zero degree in the first side (and no pair of vertices is contained in two lines of $\tpp{r}$).
Therefore, the multiset of degrees of the hypergraph $\mathcal S(\tpp{r}, S, F_1, \dots, F_r)$ consists of  
$\{ 2x_1, \ldots , 2x_t, 2x_{t+1}\}$ and some multiset with elements from  $\{ 0, 1, 2, \ldots ,2(t +2)\}$.
Using the fact that each $x_i$ is more than $t+2$, we obtain that each 
multiset $\{ x_1, \ldots, x_t\}$ with elements from $\{ t+3, \ldots, \lfloor r^{0.5} \rfloor\}$ 
corresponds to a different $\mathcal S(\tpp{r}, S, F_1, \dots, F_r)$.

The number of ways to choose the appropriate multiset is at least 
$\binom{t+\lfloor r^{0.5} \rfloor-(t+2)-1}{t} \geq
\Omega \left( \frac{2^{\sqrt{r}}}{\sqrt[4]{r}} \right)$.
\end{proof}

We can now prove Theorem~\ref{counting}.
\begin{proof}[Proof of {Theorem~\ref{counting}}]
For every prime power $q=r-1$
there are at least $\exp\left(r^{0.5-o(1)}\right)$ different 
$(q+2)$-uniform hypergraphs of the form $\mathcal S(\tpp{r}, S, F_1, \dots, F_r)$ by Lemma~\ref{CountingS}. 
Each of these hypergraphs is contained in the corresponding $(r+1)$-Ryser hypergraph 
$\mathcal H(\tpp{r}, S, F_1, \dots, F_r)$. Each 
$\mathcal H(\tpp{r}, S, F_1, \dots, F_r)$ contains some minimal $(r+1)$-Ryser 
hypergraph $\mathcal M(\tpp{r}, S, F_1, \dots, F_r)$. 
These $\exp(r^{0.5-o(1)})$ minimal extremal hypergraphs 
$\mathcal M(\tpp{r}, S, F_1, \dots, F_r)$ are all non-isomorphic by 
Lemma~\ref{Sisomorphism}, since otherwise we would obtain an 
isomorphism between some of the corresponding non-isomorphic 
hypergraphs of the form $\mathcal S(\tpp{r}, S, F_1, \dots, F_r)$.
\end{proof}

\section{Many maximal examples and the Ryser poset}
\label{sec:maximal}
Theorem~\ref{counting} proves that there are many minimal Ryser hypergraphs.
We use the following lemma to show that there are also many non-isomorphic 
maximal intersecting Ryser hypergraphs. 
The lemma states that the hypergraphs $\mathcal H(\tpp{r}, S, F_1,
\dots, F_r)$ are essentially maximal. We mean this in the sense that
there are only ``trivial'' ways to add edges to them:
any new edge must be a ``twin copy" of some $\hat{F_i}$, differing in one vertex only,
either in side $i$ or side $r+1$. 

Again, we use the truncated projective plane \tpp{r} \ for our construction in Theorem~\ref{MainTheorem}. 
 Let us fix an arbitrary line $S\in \tpp{r}$, and
an appropriate selection of lines $F_i\neq S$ with $s_i \in F_i$. 

\medskip

\begin{lemma}\label{LemmaMaximal}
For $r\geq 7$, let $\mathcal G$ be an intersecting $(r{+}1)$-partite hypergraph containing 
$\mathcal H =\mathcal H(\tpp{r}, S, F_1, \dots, F_r)$ 
and $E\in \mathcal G \setminus \mathcal H$.
Then $|E|=r+1$ and there is some $i\leq r$ and vertex $v$ such that either 
$E=F_i+v$ (of type $1$) or $E=F_i-s_i+v_i +v$ (of type $2$).
\end{lemma}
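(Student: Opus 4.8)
The plan is to analyze an arbitrary new edge $E \in \mathcal G \setminus \mathcal H$ by intersecting it with the structured families $\mathcal E_1, \mathcal E_2, \mathcal E_3$ making up $\mathcal H$, and to squeeze $E$ into one of the two claimed shapes using the intersecting property together with the very restricted intersection pattern of $\tpp r$. First I would record the basic features of $\tpp r = \mathcal T_r - \{v\}$ that will be used repeatedly: any two lines of $\mathcal P_r$ meet in exactly one point, so any two edges of $\mathcal T_r$ meet in exactly one vertex; hence within the first $r$ sides, $E$ (or rather $E$ restricted to $V_1 \cup \dots \cup V_r$) behaves like a set that must hit every line of $\tpp r - S$. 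Since $\mathcal G$ is $(r{+}1)$-partite, $E$ has at most one vertex in $V_{r+1}$, i.e.\ at most one of $v_1, \dots, v_r$; write $E' = E \cap (V_1 \cup \dots \cup V_r)$, so $E'$ has size $|E|$ or $|E|-1$.

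The core step is to show $E'$ contains one of the lines $F_i$ minus possibly the point $s_i$. Because $E$ must intersect every $\hat F \in \mathcal E_1$, and because at most one $v_j$ lies in $E$, the set $E'$ meets all but at most one line of $\tpp r - S$ inside the first $r$ sides; I would argue that $E'$ therefore either is a cover of $\tpp r - S$ of size $\le r{-}1$ — forcing $E' \subseteq V_i$ for some $i$ by the hypothesis of Theorem~\ref{MainTheorem}, which carries over to $\tpp r$ for $r \ge 4$ — or else $E'$ has size $\ge r{-}1$ and, being an intersecting partner of $r{-}1$-sets among the $F_j$'s, must essentially be one of the lines through $s_i$. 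The cleanest route: consider the two edges $F_i$ and $F_i - s_i + v_i$ of $\mathcal E_2 \cup \mathcal E_3$ for each $i$. Edge $E$ meets $F_i$ in the first $r$ sides unless the unique vertex of $E$ in $V_{r+1}$ is $v_i$ and $E' \cap F_i = \emptyset$; and $E$ meets $F_i - s_i + v_i$ in the first $r$ sides unless again $v_i \in E$ and $E' \cap (F_i - s_i) = \emptyset$. Chasing these alternatives, together with $|E'| \le r+1$ and the fact that $E'$ cannot contain two points of any line of $\tpp r$, pins down that $E'$ agrees with some $F_i$ on at least $r{-}1$ of its $r$ points and $v_i \in E$ exactly when the missing point is $s_i$. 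Distinguishing whether $s_i \in E'$ gives type $1$ (then $v_i \notin E$, and $E' = F_i + v$, $v \in V_{r+1}$, but $v$ would have to be some $v_j$, $j \ne i$, or $E$ would already be $\hat F_i \in \mathcal E_1$; actually $v$ is a genuinely new vertex of side $V_{r+1}$) or type $2$ (then $v_i \in E$ and $E' = F_i - s_i + v$ for a new vertex $v \in V_i$).

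Finally I would nail down $|E| = r+1$: each $F_i$ has $r$ vertices, so $F_i + v$ and $F_i - s_i + v_i + v$ both have $r+1$ vertices, and no smaller edge can do the job because a $r$-set inside the first $r$ sides that is not one of the $F_i$ or $\hat E$'s already present would fail to be intersecting with part of $\mathcal E_1$ — here I would invoke the same covering/intersection dichotomy as above to rule out $|E| \le r$. The requirement $r \ge 5$ enters to guarantee the intersection sizes in $\mathcal H$ are exactly $\{1, 2, r{-}1, r\}$ with these values distinct (the computation in the proof of Lemma~\ref{Sisomorphism} needs $r \ge 4$; we take one more to have comfortable separation, e.g.\ $2 < r{-}1$), so that a near-copy of $F_i$ cannot accidentally coincide with some $\hat E$ or be absorbed into another part.

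The main obstacle I anticipate is the bookkeeping in the dichotomy "$E'$ is a small cover of $\tpp r - S$, hence a side" versus "$E'$ is large, hence essentially a line": one has to be careful that $E$ might contain a vertex $v_i$ \emph{and still} meet $F_i$ inside the first $r$ sides, so the case $E' \subseteq V_i$ genuinely can occur and must be shown to force $E$ to be of type $1$ or $2$ (namely $E = V_i$-restricted things are impossible because $E$ must also meet $F_i - s_i + v_i$, forcing $v_i \in E$ or $s_i \notin E'$, etc.). Handling the interaction between the $V_{r+1}$-coordinate of $E$ and which side $E'$ could collapse onto is where the argument is most delicate, and I would organize it as a short case analysis on $|E \cap V_{r+1}| \in \{0,1\}$ and on whether $E'$ covers $\tpp r - S$.
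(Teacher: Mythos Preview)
Your plan points in the right direction but has two substantive gaps that the paper's proof fills in quite differently.

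First, you never reduce to $E \subseteq V(\mathcal H)$. The paper handles this by an extremal trick: pick a counterexample $E$ with $|E \cap V(\mathcal H)|$ maximal, and show that any vertex of $E$ outside $V(\mathcal H)$ can be swapped for a vertex of $V(\mathcal H)$ in the same side without producing an edge of $\mathcal H$ or of type~$1$ or~$2$. This swap argument is a pigeonhole over four candidate replacements, using that at most one replacement can land in $\mathcal H$, at most one can be of type~$1$, and at most one of type~$2$ (the intersection sizes $\{1,2,r{-}1,r\}$ are used here, and this is where $r\ge 5$ is needed). Without this reduction, your $E'$ may contain a vertex outside $V(\tpp r)$, and then statements about $E'$ covering lines of $\tpp r$ lose their force.

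Second, and more seriously, once $E\subseteq V(\mathcal H)$ the paper's decisive step is to observe that $B=E-v_i+s_i$ is a blocking set of the full projective plane $\mathcal P_r$ of size $q+2$, and any blocking set this small must contain an entire line $L$. This instantly pins $E$ down to $L+v_i$ or $L-s_i+p_i+v_i$; a short check against $\mathcal E_2$ and $\mathcal E_3$ then forces $L=F_j$ for the correct $j$. Your proposed substitute --- the fact that the only $(r{-}1)$-covers of $\tpp r-S$ are the sides $V_j$ --- cannot carry this weight. The set $E'$ is a partial transversal (at most one vertex per side), so it can never equal a side $V_j$, which has $r{-}1\ge 4$ vertices in a single class: that branch of your dichotomy is vacuous. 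Moreover, if $v_k\in E$ then $E'$ may miss all $r{-}2$ lines of $\tpp r-S$ through $s_k$, not ``at most one''; and the assertion that $E'$ cannot contain two points of any line of $\tpp r$ is simply false --- indeed the conclusion you are aiming for is that $E'$ contains $r{-}1$ points of some $F_i$. Without the blocking-set-contains-a-line fact (or an equivalent), the chase you sketch does not close.
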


\begin{proof} 
Suppose the statement is false. Let $\mathcal G$ and $E\in \mathcal G$ provide a
counterexample such that $|E\cap V(\mathcal H)|$ is as large as possible amongst 
all counterexamples.

\begin{claim}
$|E\cap V(\mathcal H)|=r+1$.
\end{claim}
\begin{proof}

We know that $\tau(\mathcal H)=r$ and $\mathcal G \supseteq \mathcal H$ is intersecting.
Therefore, $E\cap V(\mathcal H)$ must be a cover of $\mathcal H$.

First suppose that $|E| < r+1$. Then $E$ must be of size $r$ and fully contained in $V(\mathcal H)$. Let $V_j$ be the side in which $E$ has no vertex. We will show that we can add a vertex of $V_j$ to $E$ such that we do not create an edge of type 1 or 2.
Let $i$ be an index such that $|E\cap F_i|$ is as large as possible. Let $x$ be a vertex in $V_j\cap V(\mathcal H)$ which is not in $F_i+v_i$. We will show that $E+x$ is not of type 1 or 2.
If $|E\cap F_i|\leq r-3$, then $|(E+x)\cap F_k|\leq r-2$ for all $k$, which implies $E+x$ is not of type 1 or 2. If $|E\cap F_i|\geq r-2$, then since $r\geq 5$, we have that $|E\cap F_k|\leq r-3$ for all $k\neq i$. As before, this implies that $|(E+x)\cap F_k|\leq r-2$ for all $k\neq i$, and so the only way $E+x$ could be of type 1 or 2 is if $E+x=F_i+x$ or $E+x=F_i-s_i+v_i+x$. However, in either case we would obtain that $E\in \mathcal H$ contradicting the assumption of the lemma.
Therefore, replacing $E$ by $E+x$  increases the size of the intersection $|E\cap V(\mathcal H)|$, contradicting its 
maximality.

{}From now on we assume that $|E| =r+1$.

Suppose now that there is an  index $i$ such that $E\cap V_i \cap V(\mathcal H) =\emptyset$ and let $x\in E\cap V_i$.
Since $r\geq 5$, there must be at least $4$ vertices $x_1, x_2, x_3, x_4\in V_i\cap V(\mathcal H)$. 
Let $E_i=E -x+x_i$ for $i=1, \dots, 4$ to get four sets, 
each having one more vertex in $V(\mathcal H)$ than $E$ has. 
We show that one of these edges $E_i$ is neither in ${\cal H}$ nor has type 1 or 2.
For such an edge, ${\cal G}' = {\cal H} \cup \{ E_i\}$ is
an intersecting hypergraph and for $E_i\in {\cal G}'$ we have
$|E_i\cap V(\mathcal H)| > |E\cap V(\mathcal H)|$ contradicting  
the maximality condition of the definition of $E$.

We check the required property of the $E_i$-s.
Notice that the maximum intersection size between a pair of ($r+1$)-edges of $\mathcal H$ is 2.
Therefore, at most one of the edges $E_1, \dots, E_4$ is in $\mathcal H$. 
Further, if for distinct $j$ and $k$ we had $E_j=F_a+y_a$ and $E_k=F_b+y_b$ 
for some $a,b\in \{ 1, \ldots , r\}$ and vertices $y_a$ and $y_b$, 
then we would have $E_j\cap E_k=E- x\subseteq 
(F_a+y_a)\cap (F_b+y_b)$. Since $|(F_a+y_a)\cap (F_b+y_b)|\leq 3$ for distinct $a$ and $b$, 
$|E_j\cap E_k|\geq r{-}1$ implies  that $F_a=F_b$ and hence $E=F_a+x$, contradicting our assumption that $E$ is not of type 1.
Similarly, if for distinct $j$ and $k$ we had $E_j=F_a-s_a+v_a+y_a$ and 
$E_k=F_b-s_b+v_b+y_b$, then we would have 
$E_j\cap E_k=E\setminus V_i=(F_a-s_a+v_a)\cap (F_b-s_b+v_b)$.
Since $|(F_a-s_a+v_a)\cap (F_b-s_b+v_b)|\leq 3$ for distinct $a$ and $b$, 
$|E_j\cap E_k|\geq r{-}1$ implies  that $F_a=F_b$ and hence $E=F_a-s_a+v_a+x$, 
contradicting our assumption that $E$ is not of type 2.
In summary, at most three of the edges $E_1, \dots, E_4$ do not
satisfy the required property and hence one of them does provide 
the contradiction in the end of the previous paragraph.
\end{proof}

Let $v_i$ be the vertex in $E\cap V_{r+1}$ and $B=E-v_i+s_i$. The set $B$ intersects 
every edge of $\tpp{r}$ as well as each of the sides $V_1, \dots, V_r$, i.e. 
$B$ is a blocking set  of ${\cal P}_r$.
Since $B$ has size at most $q+2=r+1$, it must contain a full line 
$L$ of ${\cal P}_r$. 

Let $p_i$ be the vertex in $E\cap V_i$. 
Notice that $B$ has only one vertex in each side, with the exception of $V_i$, where $B$ contains both 
$s_i$ and $p_i$ (although it is possible that $s_i=p_i$). 
Since $B$ contains the line $L$ we have either $B=L+s_i$ or $B=L+p_i$.
From the definition of $B$, we obtain that $E=L+v_i$ or $E=L-s_i+p_i+v_i$ holds.

Suppose that $E=L+v_i$. If $L=F_j$ for some $j$, then $E$ is of type 1, and we are done. 
Otherwise let $s_j$ be $L\cap S$. 
If $j=i$, then $E=\hat L\in  \mathcal H(\tpp{r}, S, F_1, \dots, F_r)$, contradicting our assumption that 
$E$ is not from $\mathcal H(\tpp{r}, S, F_1, \dots, F_r)$. 
If $j\neq i$, then $L$ is disjoint from the edge $F_j-s_j+v_j\in \mathcal H$, contradicting 
$\mathcal G$ being intersecting.

Suppose that $E=L-s_i+v_i+p_i$. Since we are not in the previous case, we can assume that 
$s_i\neq p_i$. If $L=F_i$, then $E$ is of type 2, and we are done. 
Otherwise $E\cap F_i=\emptyset$ since $L\cap F_i=s_i$ and $v_i\notin F_i$, contradicting  $\mathcal{G}$ being intersecting.
\end{proof}

\subsection{The Ryser poset}
 
Extremal $r$-uniform hypergraphs for Ryser's Conjecture possess two properties for some integer~$\nu$: 
they have matching number at most $\nu$  and vertex cover number at least $(r{-}1)\nu$. 
The first of these is a monotone decreasing property, while the second is monotone increasing. 
This suggests the definition of a poset structure on the family of extremal $r$-graphs. 
While a similar poset can be defined 
for arbitrary matching number $\nu$, here we restrict ourselves for the 
intersecting case, as that is already complicated enough. 
The {\em $r$-Ryser poset} ${\cal R}_r$ contains all intersecting $r$-Ryser hypergraphs (up to isomorphism), that is, all $r$-partite, $r$-uniform
hypergraphs which are intersecting and have vertex cover number $r{-}1$. 
The poset relation $<$ is given by the sub-hypergraph relation. 

The $2$-Ryser poset ${\cal R}_2$ 
is an infinite chain of stars with $K_2$ as its minimal element.
The poset ${\cal R}_3$ was determined in \cite{HNT} (for arbitrary matching numbers). In the intersecting case, ${\cal R}_3$   has a unique minimal element:
the $3$-graph $R$ obtained from $\tpp{3}$ by deleting one of its edges. Above $R$
in the poset there is of course $\tpp{3}$, but as well the direct product of three 
infinite chains  with $R$ as their common minimal element.  This is because
one could add to $R$ an arbitrary number 
of  twin copies of each of its three degree-one vertices, without
losing its extremality for the $3$-uniform Ryser Conjecture.

To characterize $r$-Ryser hypergraphs, it is necessary
to have a full understanding of the minimal and maximal elements of the Ryser poset. 
What a minimal element in some Ryser poset should be is quite clear: an $r$-Ryser hypergraph with the
property that the deletion of any hyperedge {\em reduces} the vertex cover number. These were 
discussed in Section~\ref{sec:manyextremals}.
Naturally, one would want to define the maximal elements of the Ryser poset 
as the hypergraphs with the property that the addition of any new hyperedge {\em increases} the matching 
number. With this definition however we would not have the very much desirable property 
that an intersecting $r$-Ryser hypergraph
is always contained in a maximal one. Indeed, as we saw above it is sometimes possible to add infinitely many ``twin copies'' of an edge to an $r$-Ryser hypergraph without ever reaching a maximal element. 

This issue can be resolved by allowing countably infinite $r$-Ryser hypergraphs into the poset.   
When considering infinite hypergraphs however, a technical issue arises, since it is possible 
to have two non-isomorphic hypergraphs which are {\em both} subgraphs of each other. 
To circumvent this, we consider an equivalence relation $\sim$ on all $r$-uniform countable hypergraphs where 
$\mathcal H\sim \mathcal H'$ if $\mathcal H$ is contained in $\mathcal H'$ and also $\mathcal H'$ is contained in $\mathcal H$. 
Then equivalence classes of intersecting extremal hypergraphs form a poset under containment. 
Note that using the Sunflower Lemma it is not difficult to see that the Ryser poset has finitely many maximal elements (an argument for why this is true can be found in Chapter 2 of \cite{AK}).
Combining Lemmas~\ref{CountingS} and~\ref{LemmaMaximal} we obtain the following.

 \begin{figure}
 \centering\vspace{-1.7cm}
  \noindent\hspace{-0.7cm}
  \includegraphics[width=0.78\paperwidth]{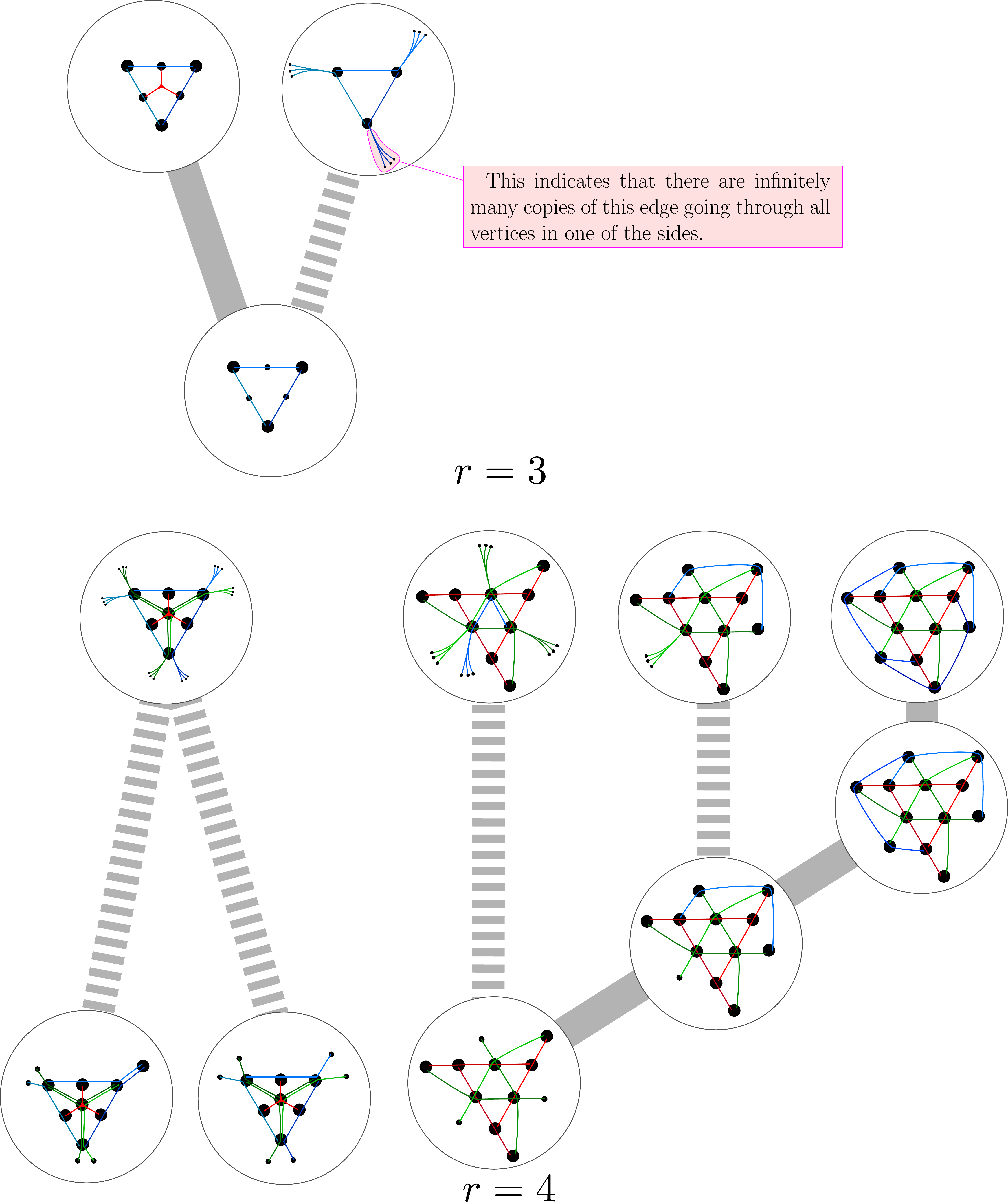}
\caption{The Ryser posets for $r=3$ and $4$. 
Each large circle contains a hypergraph in the Ryser poset: the small circles represent vertices of the hypergraphs, and the lines represent edges of the hypergraphs (with an edge containing a vertex exactly when the corresponding  line passes through a small circle). For clarity, we do not draw the hypergraphs in an obviously $r$-partite way. It is routine to check that the hypergraphs are actually $r$-partite. \\
In each case $r=3$ and $4$, we draw the Hasse diagram for the Ryser poset. The thick solid grey lines between the hypergraphs represent comparability in the Hasse diagram. Recall that the Ryser poset is infinite. The thick dashed lines represent infinite chains in the poset ending in an infinite hypergraph which is a maximal element in the poset.\\
The top and bottom rows in each diagram represent the maximal and
minimal elements of the respective poset.}
\end{figure}
\label{FigurePoset}

\medskip

\begin{corollary}\label{TheoremCountingMaximal}
There is an infinite sequence of integers $r$, for which there are 
$\exp (r^{0.5 -o(1)}) $ non-isomorphic maximal elements in the $r$-Ryser poset.
\end{corollary}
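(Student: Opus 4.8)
The plan is to combine the counting of non-isomorphic minimal Ryser hypergraphs provided by Lemmas~\ref{CountingS} and~\ref{Sisomorphism} with the maximality statement of Lemma~\ref{LemmaMaximal}, and show that passing from a minimal extremal hypergraph to a maximal one above it in the poset does not collapse too many of the isomorphism classes. First I would take, for each large prime power $q=r-2$, the $\exp(r^{0.5-\delta})$ pairwise non-isomorphic hypergraphs $\mathcal S(\tpp{r},S,F_1,\dots,F_r)$ coming from Lemma~\ref{CountingS}, and for each one consider the corresponding $\mathcal H(\tpp{r},S,F_1,\dots,F_r)$, which is an intersecting $r$-Ryser hypergraph by Theorem~\ref{MainTheorem}. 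Each such $\mathcal H$ lies in the $r$-Ryser poset (after the standard trick of turning the $\{r-1,r\}$-uniform object into an $r$-uniform one), and hence, once we have argued the poset has maximal elements above every element, sits below some maximal element $\mathcal M^{+}(\tpp{r},S,F_1,\dots,F_r)$.

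The heart of the argument is to show that distinct $\mathcal S$'s give rise to distinct maximal elements. Here Lemma~\ref{LemmaMaximal} does the work: it tells us that any intersecting $(r{+}1)$-partite $\mathcal G \supseteq \mathcal H(\tpp{r},S,F_1,\dots,F_r)$ is obtained from $\mathcal H$ by adding only edges of type~1 or type~2, i.e.\ twin copies of the $\hat F_i$ differing in a single vertex of side $i$ or side $r+1$. In particular every maximal $\mathcal M^{+}$ above $\mathcal H$ still contains $\mathcal H$, and therefore contains $\mathcal S(\tpp{r},S,F_1,\dots,F_r)=\mathcal E_2\cup\mathcal E_3$. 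Now suppose $\mathcal M^{+}(\tpp{r},S,F_1,\dots,F_r)$ and $\mathcal M^{+}(\tpp{r},S,G_1,\dots,G_r)$ were isomorphic. Both are subhypergraphs of the corresponding $\mathcal H$'s with the extra twin edges; one checks that the maximum intersection size $r-1$ is still attained only by the pairs $F_i, F_i-s_i+v_i$ (an added twin edge meets its $\hat F_i$ in $r-1$ vertices only at specific vertices, and one verifies this does not create new $(r{-}1)$-intersecting pairs outside the controlled list, using $r\ge 5$). Hence an isomorphism $\mathcal M^{+}_F\to\mathcal M^{+}_G$ restricts, exactly as in the proof of Lemma~\ref{Sisomorphism}, to an isomorphism $\mathcal S(\tpp{r},S,F_1,\dots,F_r)\to\mathcal S(\tpp{r},S,G_1,\dots,G_r)$, contradicting the choice of the $\mathcal S$'s. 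Therefore the $\exp(r^{0.5-\delta})$ hypergraphs $\mathcal M^{+}$ are pairwise non-isomorphic maximal elements of $\mathcal R_r$, giving the bound.

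A couple of points need care. One must first confirm that maximal elements exist above each $\mathcal H$: this is precisely the remark in the text that, via the Sunflower (Delta-system) Lemma, the Ryser poset has finitely many maximal elements and every element lies below one of them — so I would invoke that, or argue directly using Lemma~\ref{LemmaMaximal} that the only upward freedom is adding twin copies, and that an infinite edge set of twins is still an intersecting $r$-Ryser hypergraph representing a maximal equivalence class under $\sim$. Second, I should make sure that the intersection-size bookkeeping underlying the analogue of Lemma~\ref{Sisomorphism} still holds for $\mathcal M^{+}$ rather than a generic subhypergraph $\mathcal H_F$ with $\tau=r$; since $\mathcal M^{+}\supseteq \mathcal H \supseteq \mathcal S$ with $\tau(\mathcal M^{+})=r$, Lemma~\ref{Sisomorphism} applies essentially verbatim with ${\cal H}_F=\mathcal M^{+}_F$.

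The main obstacle, and the step I expect to demand the most attention, is verifying that adding the type~1 and type~2 twin edges to $\mathcal H$ does not create additional pairs of edges with intersection of size $r-1$ beyond the canonical pairs $\{F_i,\,F_i-s_i+v_i\}$; this is what guarantees that any isomorphism of maximal elements must respect the distinguished structure of $\mathcal S$. For $r\ge 5$ this follows from the same kind of case analysis as in Lemma~\ref{LemmaMaximal} (intersections of a twin $F_i+v$ with $\hat E$, with $F_j$, and with $F_j-s_j+v_j$), but it must be written out carefully, since a single overlooked large-intersection pair would break the reduction to Lemma~\ref{Sisomorphism}.
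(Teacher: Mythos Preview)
Your approach is essentially the paper's, with one cosmetic difference worth noting. The paper does not abstractly pass ``up to some maximal element'' via the Sunflower Lemma; instead it \emph{explicitly} constructs the maximal element above each $\mathcal H(\tpp{r},S,F_1,\dots,F_r)$ by adjoining to $\mathcal H$ all (infinitely many) edges of type~1 and type~2 permitted by Lemma~\ref{LemmaMaximal}, namely $F_i+v$ for $v\in V_{r+1}$ and $F_i-s_i+v_i+v$ for $v\in V_i$. Since Lemma~\ref{LemmaMaximal} says these are the only edges that can ever be added, the maximal element above $\mathcal H$ is unique up to $\sim$, so there is nothing to choose and no appeal to Zorn or the Sunflower Lemma is needed at this step. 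The paper then leaves the non-isomorphism of these maximal elements implicit (``combining Lemmas~\ref{CountingS} and~\ref{LemmaMaximal}''), whereas you spell out the reduction to an analogue of Lemma~\ref{Sisomorphism}; that is a genuine addition of detail on your part.

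One correction to your write-up. You say Lemma~\ref{Sisomorphism} ``applies essentially verbatim with ${\cal H}_F=\mathcal M^{+}_F$'', but that lemma's hypothesis is that $\mathcal H_F$ is a \emph{sub}hypergraph of $\mathcal H(\tpp{r},S,F_1,\dots,F_r)$, while $\mathcal M^+_F$ is a proper \emph{super}hypergraph; so it does not apply as stated. You catch this in your last paragraph, but your description of the obstacle is also slightly off: in $\mathcal M^+$ it is no longer true that only the $r$ pairs $\{F_i,\,F_i-s_i+v_i\}$ meet in $r-1$ vertices --- for instance any two twins $F_i+v$ and $F_i+v'$ meet in all of $F_i$, and $(F_i+v)\cap(F_i-s_i+v_i+v')\supseteq F_i-s_i$. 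The correct statement (for $r\ge 5$) is that the graph on $E(\mathcal M^+)$ whose edges are pairs with intersection of size at least $r-1$ has non-singleton connected components exactly the twin-classes $\{F_i+v:v\in V_{r+1}\}\cup\{F_i-s_i+v_i+v:v\in V_i\}$ for $i=1,\dots,r$, with every $\hat E$ for $E\notin\{F_1,\dots,F_r\}$ isolated. An isomorphism $\mathcal M^+_F\to\mathcal M^+_G$ therefore permutes these $r$ components, and from that one recovers $\mathcal S_F\cong\mathcal S_G$ just as in the proof of Lemma~\ref{Sisomorphism}.
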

The maximal elements in Corollary~\ref{TheoremCountingMaximal} come from the hypergraphs 
 $\mathcal H(\tpp{r}, S, F_1, \dots, F_r)$, adding infinitely many vertices to each partition, and then  adding all the (infinitely many) copies of edges of the 
 forms $F_i+v$, $v\in V_{r+1}$, and $F_i-s_i+v_i+v$, $v\in V_i$, for every
 $i=1, \dots, r$. From Lemma~\ref{LemmaMaximal} we have that the resulting infinite hypergraphs are maximal since the only edges which can be added to them must be of the form $F_i+v$ and $F_i-s_i+v_i+v$, where $v$ is a new vertex.

 \section{Concluding remarks} \label{sec:conclusion}

\paragraph{1. No truncated projective planes.} 
While it is only conjectured that projective planes, and hence truncated projective planes,
do not exist for orders other than prime powers, one can show that for infinitely many
of the uniformities $r=p+2$ our constructions deals with, a truncated 
projective plane cannot exist.
For this let us first note that there are infinitely many primes $p$ of the form $8k+5$ by 
Dirichlet's Theorem. For such primes, by a well-known consequence of 
Fermat's Theorem on sums of two squares, $p+1 = 2(4k+3)$ 
is {\em not} the sum of two squares since its prime factorization must have a prime factor
of the form $4m + 3$ with an odd exponent.
Moreover $p+1$ is congruent to 
$2$ modulo $4$, so by the Bruck-Ryser Theorem there exists no projective plane
of order $p+1$. Hence our constructions of uniformity $p+2$ provide extremal
hypergraphs, where truncated projective planes do not exist.

\paragraph{2. The Ryser poset.} 
Theorem~\ref{counting} and Corollary~\ref{TheoremCountingMaximal} show that the Ryser poset (containing all the intersecting $r$-Ryser hypergraphs) 
has many non-isomorphic 
maximal and minimal elements, giving further support for the difficulty of Ryser's Conjecture. 
A better understanding of the Ryser poset seems essential to approach Ryser's Conjecture in general. 
With Theorem~\ref{counting} and Corollary~\ref{TheoremCountingMaximal} we made the first steps in this direction.
There are several other natural extremal problems that arise.

\medskip

\begin{problem}
What is the smallest and largest number of edges in a  
minimal intersecting extremal $r$-Ryser hypergraph?
\end{problem}
For the number of edges in a maximal $r$-Ryser hypergraph,  
truncated projective planes and our construction provide examples with 
$\Theta (r^2)$ edges. 

It was observed in \cite{ABW} that if a projective plane of order $r-1$ exists then 
one can construct quite sparse intersecting Ryser hypergraphs randomly.
Kahn~\cite{KAH} proved that with high probability a 
randomly chosen $22 r \log r$ lines of $\pp{r}$ cannot be covered with less than $r$ points.
This construction immediately  implies  a $O(r\log r)$ upper bond on the minimal size of an intersecting 
extremal $r$-Ryser hypergraph. It is an open question of Mansour, Song, and
Yuster~\cite{MSY} whether there exists one with $O(r)$ edges.

It would also be interesting to decide whether there exists maximal intersecting extremal  $r$-Ryser hypergraphs with sub-quadratic number of edges.

The $2$-Ryser poset ${\cal R}_2$ and the $3$-Ryser poset ${\cal R}_3$ are well understood (in fact a complete description of extremal hypergraphs is known even when not restricting oneself to intersecting hypergraphs \cite{HNT}). 
For $r=4$ we were able to completely determine the $4$-Ryser
poset. This was done by first computationally finding all minimal
intersecting $4$-partite hypergraphs (Which was completed by the first author in ~\cite{AK}). There are 3 minimal
hypergraphs in the $4$-Ryser poset. After finding the minimal elements
it is easy to check which edges can be added to them in order to find
the full $4$-Ryser poset. See Figure~1 for a diagram of the cases
$r=3$ and $r=4$. 
Finding the Ryser poset for $r=5$ seems to be a hard problem.

\paragraph{3. Asymptotic Ryser.}
Since there is still no construction of $r$-partite $r$-uniform 
hypergraphs with $\tau(\mathcal{H})=r{-}1$ for all $r$, 
it would be interesting to investigate hypergraphs with cover number close to $r{-}1$. 
Notice that using trunctated projective planes it is possible to construct for \emph{every} $r$ an 
 $r$-partite intersecting hypergraph with  $\tau(\mathcal{H})=r-o(r)$
by adding $s = o(r)$ new vertices to each edge in $\tpp{r-s}$. 
(The necessary prime $r-s-1$ exists by the known estimates on gaps between 
consecutive primes).

Any family of graphs satisfying $\tau(\mathcal{H})=r-o(r)$, which is different from the projective plane construction would already be interesting. 
We set the following problem to motivate further research.

\medskip

\begin{problem}
For some fixed constant $c$ and every $r$ construct an $r$-uniform $r$-partite intersecting hypergraph with $\tau(\mathcal{H})=r-c$.
\end{problem}

\emph{Note added in proof}: This has been recently solved by Haxell and Scott \cite{HS2}. For sufficiently large $r$ they constructed $r$-uniform hypergraphs with  $\tau(\mathcal{H})=r-4$.

\paragraph{4. Non-intersecting Ryser.}
We constructed intersecting extremal hypergraphs for Ryser's Conjecture.
It is easy to construct extremal hypergraphs with matching number equal to $\nu$ simply by taking $\nu$ vertex-disjoint copies of an intersecting extremal hypergraph. 
A natural question is whether all extremal hypergraphs for Ryser's Conjecture can be built in a similar fashion out of intersecting ones. 
For $r=3$ we know that the answer is ``yes'' --- Haxell, Narins, and Szab\'o~\cite{HNT} showed that for any $3$-partite 
hypergraph $\hyper H$ with $\tau(\hyper H)=2\nu(\hyper H)$ contains $\nu(\hyper H)$ 
vertex-disjoint intersecting hypergraphs with cover number $2$.

Recently it was shown by Abu-Khazneh and Pokrovskiy that there exist $4$-partite  Ryser-hypergraphs with matching number $2$, that don't contain vertex-disjoint copies of the edge-minimal $4$-partite intersecting Ryser-hypergraphs. However, they still conjecture a generalisation of the tripartite Ryser-hypergraphs classification using a notion of vertex-minimality, which they back by computational evidence. Chapters 5 and 6 of \cite{AK} contain more details on these insights.

\medskip

\begin{problem}
What is the correct way -- if at all possible -- of generalising the classification of tripartite Ryser-hypergraphs?
\end{problem}

\section*{Acknowledgment}
We are indebted to Penny Haxell and Bal\'azs Patk\'os for fruitful
discussions and support.
We would like to thank Tam\'as Sz\H onyi for pointing out reference \cite{BSzT}

\bibliographystyle{plain}
\bibliography{ryser_bib_v4}
\newpage

\end{document}